\newtheorem{theorem}{Theorem}
\newtheorem{corollary}[theorem]{Corollary}
\newtheorem{proposition}[theorem]{Proposition}
\newcounter{mymac@matlab}
\newcommand{\MATLAB}{{\sc matlab}%
	\ifnum\value{mymac@matlab}<1%
	\textsuperscript{\textregistered}%
	\setcounter{mymac@matlab}{1}%
	\fi%
}
\DeclareMathOperator*{\argmin}{arg\,min}
\begin{document}
	\begin{frontmatter}
		\title{Stein-based preconditioners for weak-constraint 4D-var} 
	
	\author[UniBo]{Davide Palitta}
	\ead{davide.palitta@unibo.it}
	\affiliation[UniBo]{organization={Dipartimento 
			di Matematica and AM$^2$,
			Alma Mater Studiorum - Universit\`a di Bologna}, addressline={Piazza di Porta S. Donato, 5, I-40127}, city={Bologna}, country={Italy}, }

	\author[edi,ox]{Jemima M. Tabeart}
	\ead{tabeart@maths.ox.ac.uk}
	\affiliation[edi]{organization={School of Mathematics, University of Edinburgh}, addressline={Peter Guthrie Tait Road}, city={Edinburgh}, country={UK}}
	\affiliation[ox]{ organization={Now at: Mathematical Institute, University of Oxford}, addressline={Andrew Wiles Building}, city={Oxford}, country={UK}}
		
		
		
		\begin{abstract}
			{Algorithms for data assimilation
				try to predict the most likely state of a dynamical system by combining information from
				observations and prior models. {\color{black} Variational approaches, such as the weak-constraint four-dimensional variational data assimilation formulation considered in this paper, can ultimately be
					interpreted as a minimization problem.}
				One of the main challenges of such a formulation is the solution of large 
				{\color{black} linear systems of equations
					which arise within the inner linear step of the adopted nonlinear solver. Depending on the selected approach, these linear algebraic problems amount to either a saddle point linear system or a symmetric positive definite (SPD) one. Both formulations can be solved by means of a Krylov method, like GMRES or CG, that needs
					to be preconditioned to ensure fast convergence in terms of the number of iterations}.
				In this paper we illustrate novel, efficient preconditioning operators which involve the solution of
				certain Stein matrix equations. In addition to achieving better computational performance,
				the latter machinery allows us to derive tighter bounds for the eigenvalue distribution of
				the preconditioned linear system {\color{black}for certain problem settings}.
				A panel of diverse numerical results displays the effectiveness of the proposed methodology compared to current state-of-the-art approaches.}
		\end{abstract}
		
		\begin{keyword}
			4D-Var \sep data assimilation \sep preconditioning \sep Stein equations   
			
		\end{keyword}
	\end{frontmatter}
	
	\section{Introduction}
	Given a computational model for a dynamical system, data assimilation aims to merge observational, measured data of that system with prior model information to obtain a better estimate of the system state at a specified time. The most mature application of data assimilation is to numerical weather prediction (NWP), where it is used to obtain the initial conditions for forecasts~\cite[]{carrassi2018data}, but in recent years data assimilation approaches have been studied more broadly within earth sciences, ecology, and neuroscience; see, e.g.,~\cite[]{hess-22-2575-2018, nakamura2015inverse, schiff2011neural}.  
	In particular, observations $y_i\in\mathbb{R}^{p_i}$ at time $t_i\in[t_0,t_N]$ are combined with prior information $x_b\in\mathbb{R}^s$ from a model to compute the most likely state $x_i\in\mathbb{R}^s$ of the system at time $t_i$. It is typically assumed that the background state $x_b$ can be written as $x_b=x_0^t+\epsilon^b$ where $x_0^t$ denotes the true initial state of the system with the error $\epsilon^b$ . This error is distributed according to a normal distribution with error covariance matrix $B\in\mathbb{R}^{s\times s}$ and zero mean, i.e., $\epsilon^b\sim\mathcal{N}(0,B)$.  Similarly, we write each observation  in terms of the true initial state as  $y_i=\mathcal{H}_i(x_i^t)+\epsilon_i$ with the observation error $\epsilon_i\sim\mathcal{N}(0,R_i)$ for all $i=0,\ldots,N$. In order to map between observation and state space, we introduce a possibly nonlinear operator $\mathcal{H}_i:\mathbb{R}^s\rightarrow\mathbb{R}^{p_i}$ which connects the true state $x_i^t$ at time $t_i$ and the observational data $y_i$.
	
	One of the notable aspects of  the weak-constraint four-dimensional variational assimilation problem (4D-Var) is the propagation of the computed states. If $x_{i-1}$ denotes the state variable at time $t_{i-1}$, this is propagated to the next observation time $t_i$ via an imperfect forecast model $\mathcal{M}_i$ such that $x_i=\mathcal{M}_i(x_{i-1})+\epsilon_i^m$ where $\epsilon_i^m\sim\mathcal{N}(0,Q_i)$ for all $i=1,\ldots,N$.
	{\color{black}This is probably the main difference between the weak- and strong-constrained 4D-Var approaches. Indeed, in the latter methodology the forecast model is supposed to be exact.}
	
	The ultimate goal of {\color{black}weak-constrained} 4D-Var is then minimizing the following functional 
	\begin{align}\label{eq:objective_fun}
		J(x)=&(x_0-x_b)^TB^{-1}(x_0-x_b)+\sum_{i=0}^N(y_i-\mathcal{H}_i(x_i))^TR_i^{-1}(y_i-\mathcal{H}_i(x_i))\notag\\
		&+\sum_{i=1}^N(x_i-\mathcal{M}_i(x_{i-1}))^TQ_i^{-1}(x_i-\mathcal{M}_i(x_{i-1})),
	\end{align}
	where $x=(x_0^T,\ldots,x_N^T)^T\in\mathbb{R}^{(N+1)s}$ collects all the state variables $x_0,\ldots,x_N$. 
	
	
	The objective function~\eqref{eq:objective_fun} is often minimized by means of an incremental approach~\cite[]{Courtieretal1994}. Roughly speaking, this consists of a Gauss-Newton scheme where at each iteration a linearised problem needs to be solved. It has been shown that such a linear, inner problem can be reformulated as a large, sparse, symmetric, but also very structured saddle point linear system; see, e.g., \cite{fisher2017parallelization,fisher2018low,green2019model}. {\color{black}A more traditional approach consists of solving the SPD linear system stemming from the quadratic optimization problems arising from the adopted Gauss-Newton procedure; see e.g. \cite{el2015conditioning,tr2006accounting}.
	}
	
	
	Krylov methods like the Generalized Minimal RESidual method (GMRES)~\cite[]{SaadSchultz86} and the MINimal RESidual method (MINRES) \cite[]{PaigeSaunders75} are powerful tools for the solution of saddle point linear systems. See, e.g., the survey paper~\cite{benzi_golub_liesen_2005}. {\color{black} Similarly, the Conjugate Gradient method (CG)~\cite{HestenesStiefel1952} is the most commonly used solver for SPD linear systems.
		In both scenarios,} it is vital to choose good preconditioners for the adopted iterative scheme to ensure fast convergence in terms of both the number of iterations and wallclock times.
	
	A variety of preconditioners for the saddle point and SPD 4D-Var problems have been proposed in the literature; see, e.g., \cite{FreitagGreen18,GrattonEtal18,tabeart2021saddle,dauvzickaite2021randomised,tshimanga2008limited}.
	{\color{black}While these operators enjoy some appealing features, e.g., they guarantee parallisability in the saddle-point context}, they also neglect important features of the original linear system to achieve affordable computational costs. This worsens the capability of the preconditioners to reduce the overall iteration count. In the operational NWP setting this is particularly problematic, as in practice the maximum number of iterations is capped by a very small number compared to the dimension of the problem \cite[]{carrassi2018data}. Therefore, any preconditioning method that reduces the iteration count without dramatically increasing the computational cost is likely to be highly beneficial.

	In this work we propose to fully exploit the inherent block structure of {\color{black} both the saddle point and SPD formulations} within a matrix-oriented GMRES/CG approach, see, e.g., ~\cite{FreitagGreen18,StollBreiten15,Kressetal2014}. Such machinery naturally leads to the design of more efficient preconditioning operators with Kronecker structure. These new preconditioners yield beneficial theoretical properties, thus achieving faster convergence in terms of number of iterations than state-of-the-art approaches, while maintaining a low computational cost and an easy-to-parallelize nature. 
	
	The framework proposed in this paper requires moderate values of $p$ and $s$ (e.g. $\mathcal{O}(10^3)$) to be computationally successful. We must mention that these restrictions on the problem dimensions may be unrealistic for NWP applications but can be reasonable for other data assimilation problems such as {\color{black}parameter estimation tasks for low-dimensional parameter domains see, e.g.,~\cite{application1,application2} for some examples in agriculture. The fresh methodology we present in this paper can also be successfully applied when data assimilation is combined with model order reduction. This interesting scenario sees a first reduction step aimed at reducing the state dimension. Then the reduced model is utilized within the selected data assimilation approach; see, e.g.,~\cite{application3}. Weak-constraint 4D-Var may be a particularly appropriate choice of data assimilation scheme to be combined with model order reduction, as it is able to take the additional model error coming from the reduction step into account explicitly. 
	}
	Moreover, the techniques we develop here serve as the initial step towards novel procedures tackling large-scale problems such as the ones stemming from NWP. This will be the subject of future work.

	{\color{black} We additionally note that the weak-constraint 4D-Var problem requires more computational resource than the strong-constraint formulation, in addition to the need to prescribe model error covariance matrices. Alternative approaches have been proposed which incorporate some model error information within a strong-constraint 4D-Var approach via an inflated covariance approach, e.g. \cite{howes2017accounting,gejadze2017implicit}. However, as these methods both require the inversion of the inflated observation error covariance term, users are limited to the use of observation error covariance matrices that are easy to invert. Our new approach allows full flexibility for all data assimilation parameters, as well as revealing and exploiting the Kronecker  structure that is obscured in the usual primal form. Indeed, the inflated covariance approach is expected to perform poorly for large numbers of observation times, whereas the approach proposed in this work scales very well with $N$.}
	
	{\color{black}
		Here is a synopsis of the paper. In section~\ref{The saddle-point linear system} we recall the formulation of the SPD and saddle point linear systems stemming from weak-constraint 4D-Var. We briefly introduce matrix-oriented GMRES and CG in section~\ref{Matrix-oriented GMRES and MINRES} and in section~\ref{Preconditioning operators} we describe a general preconditioning framework to be embedded in these routines. 
		In section~\ref{The case of different Ms} we address the case of {\color{black} observation-time} dependent $\mathcal{M}_i$ and propose an original, efficient preconditioning operator. The latter is very similar to the original operator with a single exception. In particular, the original forward operator is approximated by a suitable, {\color{black} observation-time} independent one, namely  $\mathcal{M}_i\equiv\widehat{\mathcal{M}}$ for all $i$, in the preconditioning operator. Thanks to this feature, we are able to show that the inversion of a certain matrix $\mathbf{L}$, which is the predominant computational bottleneck of state-of-the-art preconditioning procedures for 4D-Var, is in fact equivalent to solving a Stein matrix equation. 
		In addition to leading to some insights regarding the selection of a suitable $\widehat{\mathcal{M}}$, we describe in section~\ref{More performing Schur-complement approximations} how the matrix-oriented perspective allows the efficient incorporation of information from the observation term within the Schur complement of the saddle point system or, equivalently, in the preconditioner of the SPD problem, by adapting an approach proposed in \cite{tabeart2021lowrank}. We note that the observation term has often been completely neglected in state-of-the-art preconditioners, but can be incorporated approximately within the Kronecker preconditioning framework at a moderate computational cost. 
		In section~\ref{Time-independent M}, some further considerations are given in the case that also the original
		forecast model $\mathcal{M}_i$ is {\color{black} observation-time independent} itself, namely $\mathcal{M}_i\equiv\mathcal{M}$ for all $i=1,\ldots,N$ in the forward operator.} A number of numerical results showing the potential of our fresh, successful strategy are reported in section~\ref{Numerical experiments}. We finish in section~\ref{Conclusions and outlook} by drawing some conclusions and presenting possible outlooks.
	
	Throughout the paper we adopt the following notation. Capital italic letters ($\mathcal{A}$) denote block matrices whose blocks have a Kronecker structure. These blocks, and in general matrices having a Kronecker structure, are denoted by capital bold letters ($\mathbf{A}$) whereas simple capital letters ($A$) are used for general matrices without any Kronecker structure. $I_N$ denotes the identity matrix of dimension $N$. The subscript is omitted whenever the dimension of $I$ is clear from the context. The $i$-th vector of the canonical basis of $\mathbb{R}^N$ is denoted by $e_i$. The Kronecker product is denoted by $\otimes$, whereas $\circ$ represents the Hadamard product. Given a matrix $X\in\mathbb{R}^{n\times n}$, $\text{vec}(X)\in\mathbb{R}^{n^2}$ is the vector collecting the columns of $X$ on top of one another. For instance, the variable $x$ in~\eqref{eq:objective_fun} can be written as $x=\text{vec}([x_0,\ldots,x_N])$. To conclude, $\lambda(A)$ denotes the spectrum of the matrix $A$, with $\lambda_{\max}(A) = \lambda_1(A)\geq\lambda_2(A)\geq \ldots  \geq\lambda_N(A) = \lambda_{\min}(A)$.
	
	\section{Linear system formulations}\label{The saddle-point linear system}
	As previously mentioned, the vector state $x$ which minimizes~\eqref{eq:objective_fun} can be computed by an incremental approach~\cite[]{Courtieretal1994} where the cost function~\eqref{eq:objective_fun} is approximated by a quadratic function of the increment $\delta x^{(\ell)}=x^{(\ell+1)}-x^{(\ell)}$, with $x^{(\ell)}$ being the $\ell$-th Gauss-Newton iterate. 
	If $\delta x=\text{vec}([\delta x_0,\ldots,\delta x_N])$, the
	quadratic objective function is given by
	\begin{align*}
		\delta J^{(\ell)}(\delta x)=&(\delta x_0-b_0^{(\ell)})^TB^{-1}(\delta x_0-b_0^{(\ell)})+\sum_{i=0}^N(d_i^{(\ell)}-H_i^{(\ell)}\delta x_i)^TR_i^{-1}(d_i^{(\ell)}-H_i^{(\ell)}\delta x_i)\\
		&+\sum_{i=1}^{N}(\delta x_{i}-M_i^{(\ell)}\delta x_{i-1})-c_i^{(\ell)})^TQ_i^{-1}(\delta x_{i}-M_i^{(\ell)}\delta x_{i-1}-c_i^{(\ell)})),
	\end{align*}
	where $b_0^{(\ell)}=x_b-x_0^{(\ell)}$, $d_i^{(\ell)}=y_i-\mathcal{H}_i(x_i^{(\ell)})$, $c_i^{(\ell)}=\mathcal{M}_i(x_{i-1}^{(\ell)})-x_i^{(\ell)}$, and
	$H_i^{(\ell)}$, $M_i^{(\ell)}$ are linearizations of $\mathcal{H}_i$ and $\mathcal{M}_i$ about $x_i^{(\ell)}$, respectively. {\color{black}We note that ${B},{Q}_i$ and ${R}_i$ are covariance matrices so that they are symmetric and positive semi-definite by construction. In addition, as inverse covariance matrices are required in the objective function formulation \eqref{eq:objective_fun} we assume these matrices to be strictly positive definite.}
	Therefore, by dropping the Gauss-Newton index $(\ell)$ for better readability and assuming $p_0=\ldots=p_N=p$, {\color{black} CG can be employed to minimize $\delta J$ by solving the following linear system}
	\begin{align}\label{eq:CGproblem}
		\underbrace{(\mathbf{L}^T\mathbf{D}^{-1}\mathbf{L} + \mathbf{H}^T\mathbf{R}^{-1}\mathbf{H})}_{\mathbf{S}}\delta x = \mathbf{D}^{-1}b + \mathbf{L}^T\mathbf{H}^T\mathbf{R}^{-1}d,
	\end{align}
	where $b=\text{vec}([b_0,c_1,\ldots,c_N])\in\mathbb{R}^{(N+1)s}$, $d=\text{vec}([d_0,\ldots,d_N])\in\mathbb{R}^{(N+1)p}$, and $\mathbf{D}, \mathbf{L}\in\mathbb{R}^{(N+1)s\times (N+1)s}$, $\mathbf{R}\in\mathbb{R}^{(N+1)p\times(N+1)p}$, $\mathbf{H}\in\mathbb{R}^{(N+1)p\times(N+1)s}$ are such that
	\begin{align*}
		\mathbf{D}=\begin{pmatrix}
			B & & & \\
			& Q_1 & & \\
			& & \ddots & \\
			& & & Q_{N}\\
		\end{pmatrix},&\quad\mathbf{L}=\begin{pmatrix}
			I & & & \\
			-M_1& I & & \\
			&\ddots & \ddots & \\
			& & -M_N&I\\
		\end{pmatrix},\\
		&\\
		\mathbf{R}=\begin{pmatrix}
			R_0 & & & \\
			& R_1 & & \\
			& & \ddots & \\
			& & & R_{N}\\
		\end{pmatrix},&\quad
		\mathbf{H}=\begin{pmatrix}
			H_0 & & & \\
			& H_1 & & \\
			& & \ddots & \\
			& & & H_{N}\\
		\end{pmatrix}.
	\end{align*}

	As an alternative to the quadratic minimization \eqref{eq:CGproblem}, $\delta x$ can be computed by solving the following saddle point linear system~\cite{fisher2017parallelization} 
	\begin{equation}\label{eq:saddle-point}
		\underbrace{\begin{pmatrix}
				\mathbf{D} & 0 & \mathbf{L}\\
				0 & \mathbf{R} & \mathbf{H}\\
				\mathbf{L}^T & \mathbf{H}^T & 0\\ 
		\end{pmatrix}}_{=:\mathcal{A}}\begin{pmatrix}
			\delta \eta\\
			\delta\lambda\\
			\delta x\\
		\end{pmatrix}
		=
		\begin{pmatrix}
			b\\
			d\\
			0\\
		\end{pmatrix},
	\end{equation}
	{\color{black}
		We note that both \eqref{eq:CGproblem} and \eqref{eq:saddle-point} contain a lot of inherent block structure\footnote{{\color{black}
				Notice that the coefficient matrix in~\eqref{eq:CGproblem} is the Schur complement of $\mathcal{A}$. This motivates the use of $\mathbf{S}$ for the former.}}.  We propose to fully exploit this structure by using matrix implementations of iterative methods and designing preconditioners with explicit Kronecker structure. We illustrate the main concept by considering a data assimilation problem  where the blocks of $\mathcal{A},\mathbf{S}$ and corresponding preconditioners $\mathcal{P}$ have Kronecker structure. This could arise naturally via consistent observation networks, with fixed observation and model error statistics, at each observation time. 
		In the case that  $Q_1=\cdots=Q_N\equiv Q$, $R_0=\cdots=R_N\equiv R$, and
		$H_1=\cdots=H_N\equiv H$, we can write the terms above compactly by using the inherent Kronecker structure
		$$\mathbf{D}=e_1e_1^T\otimes B+(I_{N+1}-e_1e_1^T)\otimes Q,
		\quad \mathbf{R}=I_{N+1}\otimes R, \quad \mathbf{H}=I_{N+1}\otimes H.$$
		In the more general setting where the covariance matrices and linearised observation operator differ at each time, preconditioners with Kronecker structure can be used within the same setting. We expect the strategy presented in section~\ref{The case of different Ms} to be effective also in the case where we relax the Kronecker assumptions on $\mathbf{R},$ $\mathbf{H}$ and $\mathbf{D}$. This will be the subject of future work.  }
	
	
		
		\subsection{Matrix-oriented GMRES and CG}\label{Matrix-oriented GMRES and MINRES}
		The Kronecker form of $\mathbf{S}$ and the blocks of the coefficient matrix $\mathcal{A}$ naturally suggests the use of matrix-oriented Krylov subspace methods to solve the linear systems~\eqref{eq:CGproblem} and~\eqref{eq:saddle-point}. 
		Depending on the adopted preconditioning operator (see section~\ref{Preconditioning operators}),
		the most popular solution schemes for solving~\eqref{eq:saddle-point} is GMRES, or MINRES if symmetry is preserved. Similarly, CG is employed for~\eqref{eq:CGproblem}.
		It is well-known that the original vector form of such methods can be easily transformed in order to obtain a matrix-oriented formulation of these routines. These implementations can be obtained by exploiting the properties of the Kronecker product~\cite[]{VanLoan00} and the equivalence between the 2-norm of vectors and the matrix inner product, namely $\text{vec}(A)^T\text{vec}(B)=\text{trace}((AB)^TAB)$.
		See, e.g., \cite{FreitagGreen18,StollBreiten15,PalittaK21,Kressetal2014} and Appendix B.
		
		We would like to point out that none of the Krylov routines used to obtain the results in section~\ref{Numerical experiments} are equipped with low-rank truncations as suggested in~\cite{FreitagGreen18,StollBreiten15,Kressetal2014}. These truncations steps are essential to obtain a feasibile storage demand when very large dimensional problems are considered. Here we suppose $p$, $s$, and $N$ to be moderate, say $\mathcal{O}(10^3)$, so that issues related to the memory consumption in our solvers do not occur in general. Avoiding the employment of any low-rank truncation will be also crucial to obtain the results stated in Proposition~\ref{prop:Mi not equal M} and section~\ref{Spectral results}.  
		\subsection{Preconditioning operators}\label{Preconditioning operators}
		It is well-known that Krylov subspace techniques require  effective preconditioning operators to obtain fast convergence in terms of the number of iterations.

		In the 4D-Var context, many authors considered preconditioners for~\eqref{eq:CGproblem} of the form
		\begin{equation}\label{eq:Shat}
			\mathbf{\widehat S}:=\mathbf{L}^T\mathbf{D}^{-1}\mathbf{L},
		\end{equation}
		which neglect the second term $\mathbf{H}^T\mathbf{R}^{-1}\mathbf{H}$ in the definition of $\mathbf{S}$. See, e.g., \cite{FreitagGreen18,GrattonEtal18,tabeart2021saddle}.  This leads to an easier-to-invert preconditioning operator\footnote{{\color{black} Notice that, due to its structure, $\mathbf{L}$ is always nonsingular, regardless of the $M_i$'s.}}
		as $\mathbf{\widehat S}^{-1}=\mathbf{L}^{-1}\mathbf{D}\mathbf{L}^{-T}$. 
		A key limitation of this preconditioner is that computation of the inverse operators $\mathbf{L}^{-1}$ and $\mathbf{L}^{-T}$ requires many serial matrix products and is thus not parallelisable. One of the main strategies to overcome this issue is the introduction of a further layer of approximation related to employing an operator $\mathbf{\widehat L}\approx \mathbf{L}$ in the definition of $\mathbf{\widehat S}$, such that multiplication of a vector by  $\mathbf{\widehat S}^{-1}=\mathbf{\widehat L}^{-1}\mathbf{D}\mathbf{\widehat L}^{-T}$ can be distributed over multiple processors. Different options for the selection of $\mathbf{\widehat L}$ can be found in, e.g., \cite{fisher2017parallelization,GrattonEtal18,FreitagGreen18,tabeart2021saddle}. 
		
		For saddle-point linear systems of the form~\eqref{eq:saddle-point}, some of the most commonly-used preconditioners are the \emph{block diagonal} and \emph{block triangular} preconditioners. See, e.g., \cite{BenziWathen2008,MurphyEtal00,GrattonEtal18,FreitagGreen18}. In particular, the block diagonal preconditioner is defined as follows
		\begin{equation}\label{eq:def_P_D}
			\mathcal{P}_{\mathcal{D}}:=\begin{pmatrix}
				\mathbf{D} & & \\
				& \mathbf{R} & \\
				& & \mathbf{\widehat S}\\
			\end{pmatrix},
		\end{equation}
		where $\mathbf{\widehat S}$ is again such that $\mathbf{\widehat S}\approx\mathbf{S}=\mathbf{L}^T\mathbf{D}^{-1}\mathbf{L}+\mathbf{H}^T\mathbf{R}^{-1}\mathbf{H}$ is an approximation to the Schur complement $\mathbf{S}$ of $\mathcal{A}$, {\color{black} and it is often of the form~\eqref{eq:Shat}}. 
		
		Similarly, the  block triangular preconditioner is defined as follows
		\begin{equation}\label{eq:def_P_T}
			\mathcal{P}_{\mathcal{T}}:=\begin{pmatrix}
				\mathbf{D} & 0 &    {\mathbf{L}}\\
				& \mathbf{R} & \mathbf{H}\\
				& & \mathbf{\widehat S}\\
			\end{pmatrix}.
		\end{equation}

		A different class of preconditioning operators for data assimilation problems is given by the \emph{inexact constraint} preconditioner
		\begin{equation}\label{eq:def_P_C}
			\mathcal{P}_{\mathcal{C}}:=\begin{pmatrix}
				\mathbf{D} & 0 & \mathbf{\widehat L}\\
				0 & \mathbf{R} & 0\\
				\mathbf{\widehat L}^T &0 & 0\\
			\end{pmatrix},
		\end{equation}
		which does not involve the inexact Schur complement $\mathbf{\widehat S}$.
		
		Clearly, the effectiveness of the preconditioning operators $\widehat{\mathbf S}$, $\mathcal{P}_\mathcal{D}$, $\mathcal{P}_\mathcal{T}$, and $\mathcal{P}_\mathcal{C}$ significantly depends on the adopted approximations $\mathbf{\widehat L}$ and $\mathbf{\widehat S}$.
		In this paper we introduce novel tools which allow for more successful selections of $\mathbf{\widehat L}$ and $\mathbf{\widehat S}$. {\color{black} In particular, in section~\ref{On the inversion of the Stein operator} we propose  a novel approximation $\mathbf{\widehat L}\approx \mathbf{L}$ which amounts to a Stein operator. We will show that the inversion of such $\mathbf{\widehat L}$ is still computationally affordable by exploiting its matrix equation structure, while it leads to a dramatic decrease in the iteration count.} Moreover, we extend this matrix-oriented method to preconditioning operators $\mathbf{\widehat S}$ which explicitly take into account  information from the observation term $\mathbf{H}^T\mathbf{R}^{-1}\mathbf{H}$ of the Schur complement $\mathbf{S}$ (see section~\ref{More performing Schur-complement approximations}) by adapting a low-rank correction approach that was proposed in \cite{tabeart2021lowrank}. The original techniques proposed in this paper lead to the design of preconditioning operators with better theoretical properties (section~\ref{Spectral results}) and more competitive computational records (section~\ref{Numerical experiments}).
		
		We conclude this section by presenting a novel result related to the use of $\mathcal{P}_\mathcal{D}$ in our setting. We report the proof of the following theorem in Appendix A.
		\begin{theorem}\label{th:P_Dbasisvectors}
			If $\mathbf{b}=(b^T,d^T,0)^T$ denotes the right-hand side in~\eqref{eq:saddle-point}, then the orthonormal basis vectors $\{v_1,\ldots,v_m\}$ of the Krylov subspace $K_m(\mathcal{AP}_{\mathcal{D}}^{-1},\mathbf{b})=\text{span}\{\mathbf{b},\mathcal{AP}_{\mathcal{D}}^{-1}\mathbf{b},\ldots,(\mathcal{AP}_{\mathcal{D}}^{-1})^{m-1}\mathbf{b}\}$ computed by GMRES are such that
			$$
			v_{2k-1}=\begin{bmatrix}
				u_{2k-1}\\
				w_{2k-1}\\
				0
			\end{bmatrix}, \quad\text{and}\quad 
			v_{2k}=\begin{bmatrix}
				0\\
				0\\
				z_{2k}
			\end{bmatrix},\quad\text{for any}\;k\geq1.
			$$
		\end{theorem}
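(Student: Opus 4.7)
The plan is to compute the preconditioned matrix $\mathcal{A}\mathcal{P}_{\mathcal{D}}^{-1}$ explicitly and exploit the sparsity pattern that arises on two distinguished subspaces. Since $\mathbf{D}^{-1}\mathbf{D}=I$ and $\mathbf{R}^{-1}\mathbf{R}=I$, the product simplifies to
\begin{equation*}
\mathcal{A}\mathcal{P}_{\mathcal{D}}^{-1}=\begin{pmatrix} I & 0 & \mathbf{L}\mathbf{\widehat S}^{-1} \\ 0 & I & \mathbf{H}\mathbf{\widehat S}^{-1} \\ \mathbf{L}^T\mathbf{D}^{-1} & \mathbf{H}^T\mathbf{R}^{-1} & 0 \end{pmatrix}.
\end{equation*}
Let $V_1$ be the subspace of block vectors whose third block vanishes and $V_2$ the subspace of block vectors whose first two blocks vanish. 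These subspaces are orthogonal with respect to the standard inner product, and their direct sum is the whole ambient space.

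The key structural observation that I would then establish is the pair: (i) if $v=(u^T,w^T,0)^T\in V_1$, then $\mathcal{A}\mathcal{P}_{\mathcal{D}}^{-1} v = v + \alpha$, where $\alpha=(0,0,(\mathbf{L}^T\mathbf{D}^{-1} u + \mathbf{H}^T\mathbf{R}^{-1} w)^T)^T\in V_2$, so that the top two blocks of $\mathcal{A}\mathcal{P}_{\mathcal{D}}^{-1}v$ are preserved \emph{identically}; (ii) if $v=(0,0,z^T)^T\in V_2$, then $\mathcal{A}\mathcal{P}_{\mathcal{D}}^{-1} v\in V_1$. I want to stress that (i) says more than mere $V_1$-invariance of the top two components, and this strengthening is precisely what will make the Arnoldi recursion collapse to the alternating pattern of the claim.

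The rest is induction on $k$, tracking the Arnoldi procedure. The base case $v_1=\mathbf{b}/\|\mathbf{b}\|\in V_1$ is immediate since $\mathbf{b}$ has a zero third block. For the inductive step $v_{2k-1}\mapsto v_{2k}$, write $w=\mathcal{A}\mathcal{P}_{\mathcal{D}}^{-1} v_{2k-1}=v_{2k-1}+\alpha$ with $\alpha\in V_2$ by (i); using $V_1\perp V_2$ and orthonormality of the preceding basis vectors, the Arnoldi coefficients reduce to $\langle w,v_{2i-1}\rangle=\delta_{k,i}$ and $\langle w,v_{2i}\rangle=\langle\alpha,v_{2i}\rangle$. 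After subtracting, the unnormalised next vector equals $\alpha-\sum_{i<k}\langle\alpha,v_{2i}\rangle v_{2i}$, which lies in $V_2$, so $v_{2k}\in V_2$. The step $v_{2k}\mapsto v_{2k+1}$ is analogous but cleaner: by (ii) we have $w\in V_1$, the inner products against earlier $V_2$-basis vectors vanish by orthogonality, and the orthogonalisation residual therefore remains in $V_1$.

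The main obstacle I anticipate is pinning down observation (i) in its sharp form: it is essential that the top two blocks of $\mathcal{A}\mathcal{P}_{\mathcal{D}}^{-1} v$ coincide exactly with those of $v$, not merely lie in $V_1$, since otherwise a spurious $V_1$-correction would enter the Arnoldi orthogonalisation and break the alternating pattern. A minor loose end is the possibility of an Arnoldi breakdown (the residual $\tilde w$ vanishing before step $m$); this just means the Krylov subspace has been exhausted and the statement is then vacuous for the missing iterates, so it does not threaten the conclusion for any vector actually produced by GMRES.
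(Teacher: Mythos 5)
Your proposal is correct and follows essentially the same route as the paper's own proof: both compute $\mathcal{A}\mathcal{P}_{\mathcal{D}}^{-1}$ explicitly, observe that it preserves the top two blocks identically on vectors with zero third block and maps pure third-block vectors back into the complementary subspace, and then run the induction through the Arnoldi orthogonalisation using the orthogonality of the two block subspaces. Your explicit $V_1\oplus V_2$ formulation and the remark about breakdown are just cleaner packaging of the same argument.
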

		The zero block structure of the basis vectors illustrated in Theorem~\ref{th:P_Dbasisvectors} can be exploited to design more efficient implementations of the preconditioning step involving $\mathcal{P}_\mathcal{D}$. For instance, we can invert the (inexact) Schur complement $\widehat{\mathbf{S}}$ only for alternate iterations. Similarly, the linear systems with $\mathbf{D}$ and $\mathbf{R}$ play a role only in case of an odd iteration index.
		The GMRES orthogonalization step can also benefit from Theorem~\ref{th:P_Dbasisvectors}, as there is no need to explicitly perform the orthonormalization of the blocks which necessarly have to be zero in the current iteration.

		We take advantage of these observations to obtain all the results related to the performance achieved by $\mathcal{P}_\mathcal{D}$, which are reported in section~\ref{Numerical experiments}.
		
		
		\section{A new preconditioning operator}\label{The case of different Ms}
		{\color{black}
			In this section we present the main contribution of this paper. In particular, we propose to use the following operator 
			\begin{equation}\label{eq:Lhat}
				\widehat{\mathbf{L}} = I_{N+1}\otimes I_s - \Sigma\otimes\widehat{M}=\begin{pmatrix}
					I & & & \\
					-\widehat{M}& I & & \\
					&\ddots & \ddots & \\
					& & -\widehat{M}&I\\
				\end{pmatrix},    
			\end{equation}  
			in place of $\mathbf{L}$ within the selected preconditioning framework.
			The matrix $\widehat{M}$ in~\eqref{eq:Lhat}
			is chosen to be some representative value of the $M_i$'s defining $\mathbf{L}$.
		}
		
		In the numerical experiments in section \ref{Numerical experiments} we consider a number of options for $\widehat{M}$ including, one of the $M_i$'s (e.g. the smallest/largest in norm {\color{black}or condition number}, first/last in the sequence), possibly cycling on the index $i$, and the Karcher matrix mean~\cite[]{BiniIann2013} when the $M_i$'s are all SPD\footnote{Alternative matrix means can be used depending on the problem at hand.}.
		
		{\color{black} The employment of the operator $\widehat{\mathbf{L}}$ described in~\eqref{eq:Lhat} in the definition of $\widehat{\mathbf{S}}$, $\mathcal{P}_\mathcal{D}$, $\mathcal{P}_\mathcal{T}$, and $\mathcal{P}_\mathcal{C}$ leads to novel preconditioning operators for~\eqref{eq:CGproblem} and~\eqref{eq:saddle-point} that can significantly outperform other state-of-the-art approaches. 
			In the next proposition we provide some indications of when we might expect using our fresh approach to be particularly effective. To this end, we present theoretical bounds on the eigenvalues of $\mathbf{\widehat{L}}^{-T}\mathbf{L}^T\mathbf{L}\mathbf{\widehat{L}}^{-1}$.}
		
		\begin{proposition}\label{prop:Mi not equal M}
			Let $D_i = \widehat{M}-M_i$ and $\widehat{\mathbf{L}}$ as in~\eqref{eq:Lhat}. The eigenvalues of $\widehat{\mathbf{L}}^{-T}\mathbf{L}^T\mathbf{L}\widehat{\mathbf{L}}^{-1}$ can be bounded above by
			\begin{equation}\label{eq:upperbound_prep}
				1 + \frac{N}{2}\left(\rho_N+\sqrt{\rho_N^2+4\rho_N}\right),
			\end{equation}
			where
			\begin{equation}
				\rho_N = \left\{\begin{array}{ll}
					N\cdot \max_{m=1,\dots,N}\lambda_{max}(D_m^TD_m), &  \text{if}\;\lambda_{\max}(\widehat{M}^T\widehat{M})=1,\\
					&\\
					\frac{1-\lambda_{\max}^N(\widehat{M}^T\widehat{M})}{1-\lambda_{\max}(\widehat{M}^T\widehat{M})}\cdot\max_{m=1,\dots,N}\lambda_{max}(D_m^TD_m),
					& \text{otherwise}.
				\end{array}\right.
			\end{equation}
		\end{proposition}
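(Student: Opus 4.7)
The plan is to view the statement as a perturbation bound on $\mathbf L\widehat{\mathbf L}^{-1}$. Writing $\mathbf L = \widehat{\mathbf L} + \mathbf E$, where $\mathbf E$ is the block strictly subdiagonal matrix with blocks $D_i$ on its subdiagonal, one has $\mathbf L\widehat{\mathbf L}^{-1} = I + G$ with $G := \mathbf E\widehat{\mathbf L}^{-1}$, so that $\widehat{\mathbf L}^{-T}\mathbf L^T\mathbf L\widehat{\mathbf L}^{-1} = (I+G)^T(I+G)$ and the claim becomes a bound on $\|I+G\|_2^2$. Equivalently, I would work with the generalized Rayleigh quotient $\|\mathbf L y\|_2^2/\|\widehat{\mathbf L} y\|_2^2$: setting $r := \widehat{\mathbf L} y$ turns the constraint into the Stein matrix equation $Y - \widehat M\, Y\, \Sigma^T = R$, whose unique solution (by nilpotency of $\Sigma$) has the closed form $y_i = \sum_{j=1}^{i}\widehat M^{\,i-j}\, r_j$.

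From the expansion $\|\mathbf L y\|_2^2 = \|r\|_2^2 + 2\langle r,\mathbf E y\rangle + \|\mathbf E y\|_2^2$, the two perturbative summands have to be estimated. Applying $\|\widehat M^k\|_2 \le \mu^k$ (with $\mu^2 = \lambda_{\max}(\widehat M^T\widehat M)$) and Cauchy--Schwarz to the closed form for $y_i$ gives the blockwise estimate $\|y_i\|_2^2 \le s_i \sum_{j\le i}\|r_j\|_2^2$, where $s_i = \sum_{k=0}^{i-1}\mu^{2k}$. This is the origin of the geometric-series expression in $\rho_N = \delta^2 s_N$ and of the case split $\mu^2 = 1$ versus $\mu^2 \ne 1$ in the statement. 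Summing over $i$ and using $\|D_m\|_2 \le \delta$ with $\delta^2 = \max_m \lambda_{\max}(D_m^T D_m)$, one obtains $\|\mathbf E y\|_2^2 \le N\rho_N\|r\|_2^2$, and a further Cauchy--Schwarz gives $|\langle r,\mathbf E y\rangle| \le \|r\|_2\|\mathbf E y\|_2$.

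The concluding step is to turn these inputs into the sharp claimed bound, which is equivalent to the quadratic inequality $(\lambda-1)^2 \le N\rho_N\,(\lambda + N - 1)$; its positive root is exactly $1 + \tfrac{N}{2}(\rho_N + \sqrt{\rho_N^2 + 4\rho_N})$, and it is also the larger eigenvalue of the $2\times 2$ comparison matrix $\bigl(\begin{smallmatrix}1+N\rho_N & N\sqrt{\rho_N}\\ N\sqrt{\rho_N} & 1\end{smallmatrix}\bigr)$. A convenient bridge to reach it is the identity $(\lambda - 1)\|r\|_2^2 = \langle \mathbf E y,\mathbf L y + r\rangle$, which, combined with the expansion $\|\mathbf L y + r\|_2^2 = (2\lambda+2)\|r\|_2^2 - \|\mathbf E y\|_2^2$ (obtained by rewriting the inner products from the previous step) and Cauchy--Schwarz, produces a quadratic in $\lambda$ in which $\|\mathbf E y\|_2^2$ appears as a free parameter. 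The main obstacle is precisely this coupling step: bounding the cross and quadratic terms independently only yields the weaker estimate $\lambda \le (1 + \sqrt{N\rho_N})^2$, and achieving the tighter claimed form requires carrying both bounds jointly through the Stein recursion rather than estimating them in isolation.
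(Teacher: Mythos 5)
Your setup is sound and you have correctly identified both the perturbation structure ($\mathbf{L}=\widehat{\mathbf{L}}+\mathbf{E}$ with $D_i$ on the subdiagonal of $\mathbf{E}$, the closed form $y_i=\sum_{j\le i}\widehat M^{\,i-j}r_j$ from the Stein recursion, and the geometric-series origin of $\rho_N$) and the exact quadratic $(\lambda-1)^2\le N\rho_N(\lambda+N-1)$ whose positive root is the claimed bound. But as you yourself concede, you do not actually establish that quadratic: the global Rayleigh-quotient argument, with $\|\mathbf{E}y\|^2$ and the cross term estimated by Cauchy--Schwarz, only delivers $\lambda\le(1+\sqrt{N\rho_N})^2$, and your proposed bridge identity $(\lambda-1)\|r\|^2=\langle\mathbf{E}y,\mathbf{L}y+r\rangle$ still leaves $\|\mathbf{E}y\|^2$ as a free parameter that cannot be replaced by its upper bound without losing the $\mu^2/(\mu+1)$ structure. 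So the proof is incomplete at precisely the step that produces the stated form of the bound.

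The missing idea is a decomposition of the perturbation rather than a single global estimate. Writing $\widehat{\mathbf{L}}^{-T}\mathbf{L}^T\mathbf{L}\widehat{\mathbf{L}}^{-1}=I+A(\widehat M)$ with $A(\widehat M)=G+G^T+G^TG$ and $G=\mathbf{E}\widehat{\mathbf{L}}^{-1}$, one splits $A(\widehat M)=\sum_{m=1}^N A_m$, where $A_m=(I+G_m)^T(I+G_m)-I$ and $G_m$ is the single block row of $G$ containing $D_m$. Each $A_m$ has rank at most $2s$, and its nonzero eigenvalues $\mu$ can be computed \emph{exactly}: they satisfy $\mu^2/(\mu+1)=\rho$ where $\rho$ runs over the eigenvalues of $D_m\bigl(\sum_{k=0}^{m-1}\widehat M^k\widehat M^{kT}\bigr)D_m^T$, i.e. $\mu=\tfrac12\bigl(\rho\pm\sqrt{\rho^2+4\rho}\bigr)$. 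Bounding $\rho\le\rho_N$ via $\lambda_{\max}(D_m^TD_m)\sum_{k}\lambda_{\max}(\widehat M^T\widehat M)^k$ gives $\lambda_{\max}(A_m)\le\tfrac12\bigl(\rho_N+\sqrt{\rho_N^2+4\rho_N}\bigr)$ for every $m$, and Weyl's inequality for the sum of the $N$ pieces yields the factor $N$ in front. It is this per-$D_m$ exact spectral analysis that resolves the ``coupling'' you ran into; carrying the cross and quadratic terms jointly through one Rayleigh quotient, as you attempt, cannot reproduce the $\tfrac12(\rho+\sqrt{\rho^2+4\rho})$ shape. (As a side remark, neither bound dominates the other: for small $\rho_N$ and large $N$ your $(1+\sqrt{N\rho_N})^2$ is actually tighter, but it is not the statement to be proved.)
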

		
		Due to the multiple levels of approximation used to obtain the result of Proposition~\ref{prop:Mi not equal M}, the bounds are likely to be loose in practice. However, the qualitative information encoded in~\eqref{eq:upperbound_prep} may provide a way to select a `good' choice of $\widehat{M}$, and an indication of when the preconditioner~\eqref{eq:Lhat} is likely to be effective.
		
		In particular, Proposition~\ref{prop:Mi not equal M} indicates that the best results are likely to be obtained when  $\max_{i,j}\|M_i-M_j\|$ is small. If the difference between the linearised model operators is large then the maximum eigenvalue of the difference terms cannot all be kept small. Similarly, the spectral norm of $\widehat{M}$ itself must also be small. If not, then the sum in~\eqref{eq:mu} will blow up rapidly even for moderate values of $N$. Both of these observations provide insight into a heuristic way to select $\widehat{M}$: begin by choosing $M_i$ with smallest norm. If all the values of $||M_i||$ are similar, then it is likely that the $D_i$ term becomes more important -- we can then choose $\widehat{M}$ to be the value of $M_i$ that minimises the average value of $\|D_iD_i^T\|$. {\color{black} See section~\ref{Numerical experiments} for a panel of diverse numerical experiments displaying such trends.
			
			To obtain a successful preconditioning strategy, operating with $\widehat{\mathbf{L}}$ in~\eqref{eq:Lhat} must not be computationally demanding. In particular, the application of the preconditioners $\widehat{\mathbf{S}}$, $\mathcal{P}_\mathcal{D}$, $\mathcal{P}_\mathcal{T}$, and $\mathcal{P}_\mathcal{C}$ always requires the inversion of 
			$\widehat{\mathbf{L}}$. The efficient computation of  
			$\widehat{\mathbf{L}}^{-1}$ will be the subject of the next section.
		}
		

		
		\subsection{On the inversion of the Stein operator $\widehat{\mathbf{L}}$}\label{On the inversion of the Stein operator}
		{\color{black} Thanks to the properties of the Kronecker product, see, e.g.,~\cite{Simoncini16}, the action of $\widehat{\mathbf{L}}$ in~\eqref{eq:Lhat} on a vector $z=\text{vec}(Z)$ can be written as follows
			$$\widehat{\mathbf{L}}z=\text{vec}(Z-\widehat{M}Z\Sigma^T).$$
			A linear operator of the form 
			$$\begin{array}{rrll}
				\mathfrak{L}:&\mathbb{R}^{s\times (N+1)}&\rightarrow&\mathbb{R}^{s\times (N+1)}\\
				& Z&\mapsto& Z-\widehat{M}Z\Sigma^T\\
			\end{array}
			$$
			is called a Stein operator in the matrix equation literature; see, e.g.,~\cite{Simoncini16}. Therefore, $\widehat{\mathbf{L}}z=\text{vec}(\mathfrak{L}(Z))$. Due to this relation,
			hereafter, with abuse of notation, we say that also $\widehat{\mathbf{L}}$ amounts to a Stein operator. The inversion of $\widehat{\mathbf{L}}$ is thus equivalent to inverting $\mathfrak{L}$, and hence to solving a so-called Stein matrix equation 
			\begin{equation}\label{eq:Stein_eq}
				\text{vec}(Z)=\widehat{\mathbf{L}}^{-1}\text{vec}(V) \quad \Longleftrightarrow\quad 
				Z-\widehat{M}Z\Sigma^T=V.    
			\end{equation}
			Similarly, 
			\begin{equation}\label{eq:TStein_eq}
				\text{vec}(Z)=\widehat{\mathbf{L}}^{-T}\text{vec}(V) \quad \Longleftrightarrow\quad 
				Z-\widehat{M}^TZ\Sigma=V.    
			\end{equation}
		}
		Different numerical methods have been proposed in the literature for the efficient solution of Stein matrix equations. See, e.g., \cite{Barraud77,Jbilou11} and \cite[][Section~6]{Simoncini16}.
		
		In our setting, we need to solve equations~\eqref{eq:Stein_eq}
		and~\eqref{eq:TStein_eq} several times. Indeed, depending on the adopted preconditioning scheme, a couple of Stein equations have to be solved at each GMRES/CG iteration. 
		By fully exploiting the structure of the coefficient matrices defining the Stein equations, we illustrate a novel solution procedure that remarkably reduces the computational cost of the preconditioning steps. Our original scheme requires some minor precomputation that can be performed {\color{black}once prior to the start of the} 
		adopted Krylov iterative scheme.
		
		We first notice that we can write 
		\begin{equation}\label{eq:Sigmaform}
			\Sigma=C-e_1e_{N+1}^T,\qquad C=\begin{pmatrix}
				0 & & & 1\\
				1 & 0 & &\\
				& \ddots& \ddots &  \\
				& & 1 & 0\\ 
			\end{pmatrix}.
		\end{equation}
		Thanks to its circulant structure, $C$ can be cheaply diagonalized by the fast Fourier transform (FFT), namely $C=F^{-1}\Pi F$ where 
		\begin{equation*}
			\Pi=\text{diag}(\pi_1,\ldots,\pi_{N+1}), \quad (\pi_1,\ldots,\pi_{N+1})^T=FCe_1,
		\end{equation*}
		and $F$ denotes the discrete Fourier matrix. The observation in~\eqref{eq:Sigmaform} leads to the following result.
		\begin{proposition}\label{Prop:SteinSol}
			Let $\widehat{M}=T\Lambda T^{-1}$ be the eigendecomposition of $\widehat{M}$ with $\Lambda=\text{diag}(\lambda_1,\ldots,\lambda_s)$ and $P\in\mathbb{C}^{s\times (N+1)}$ be such that $P_{i,j}=1/(1-\lambda_i\pi_j)$. Moreover, let ${\color{black}U}=I+\text{diag}((P(\Lambda Fe_1\circ F^{-T}e_{N+1})))$ where $\circ$ denotes the Hadamard product.
			Then the solution $Z$ to the Stein equation in~\eqref{eq:Stein_eq} can be written as
			\begin{equation}\label{eq:SolStein}
				Z=T(Y-W)F^{-T},    
			\end{equation}
			where 
			$$Y=P\circ(T^{-1}VF^{T}),\quad\text{and}\quad W=P\circ({\color{black}U}^{-1}(\Lambda YF^{-1}e_{N+1})e_1^TF^{T}).
			$$
			Similarly, the solution $Z$ to~\eqref{eq:TStein_eq} is such that
			\begin{equation}\label{eq:SolTStein}
				{\color{black}Z}=T^{-T}(G-H)F,    
			\end{equation}
			where 
			$$G=P\circ(T^{T}{\color{black}V}F^{-1}),\quad\text{and}\quad H=P\circ({\color{black}U}^{-1}(\Lambda GF^{T}e_{1})e_{N+1}^TF^{-1}).
			$$
		\end{proposition}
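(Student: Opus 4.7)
The plan is to diagonalize both sides of the Stein equation by combining the spectral factorization $\widehat{M}=T\Lambda T^{-1}$ with the FFT factorization $C=F^{-1}\Pi F$, and then to absorb the rank-one perturbation $-e_1 e_{N+1}^T$ that distinguishes $\Sigma$ from the circulant $C$ via a Sherman--Morrison-type reduction to a small diagonal system.

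First I would substitute $\Sigma^T=C^T-e_{N+1}e_1^T$ into \eqref{eq:Stein_eq} to rewrite it as
\begin{equation*}
  Z-\widehat{M}ZC^T \;=\; V - \widehat{M}Z e_{N+1}e_1^T,
\end{equation*}
whose last term has rank one and depends on $Z$ only through the single column $\widehat{M}Ze_{N+1}$. Next, using $C^T=F^T\Pi F^{-T}$ and $\widehat{M}=T\Lambda T^{-1}$, I would multiply the equation on the left by $T^{-1}$ and on the right by $F^T$ and introduce the change of variables $\widehat{Z}:=T^{-1}ZF^T$. The left-hand side then becomes the entrywise operator $(\widehat{Z})_{ij}\mapsto (1-\lambda_i\pi_j)(\widehat{Z})_{ij}$, which is inverted by Hadamard multiplication with the matrix $P$. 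Consequently $\widehat{Z}$ must take the form $Y-P\circ\bigl((\Lambda\alpha)(Fe_1)^T\bigr)$, where $Y=P\circ(T^{-1}VF^T)$ is the contribution from the first term on the right-hand side and $\alpha\in\mathbb{C}^s$ is an unknown vector parametrising the rank-one correction.

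The second step is to pin down $\alpha$. By construction $\alpha$ must coincide with $\widehat{Z}F^{-T}e_{N+1}$; imposing this self-consistency condition on the expression above and using the identity $\bigl((P\circ((\Lambda\alpha)q^T))r\bigr)_i=\lambda_i\alpha_i\bigl(P(q\circ r)\bigr)_i$, with $q=Fe_1$ and $r=F^{-T}e_{N+1}$, reduces the condition to a linear system $U\alpha=YF^{-T}e_{N+1}$ in which $U$ is \emph{diagonal}. Solving this $s\times s$ diagonal system, plugging $\alpha=U^{-1}YF^{-1}e_{N+1}$ back into the formula for $\widehat{Z}$, and undoing the change of variables via $Z=T\widehat{Z}F^{-T}$ gives \eqref{eq:SolStein}; the symmetry $F^T=F$ of the DFT matrix (so that $F^{-T}=F^{-1}$) is used to rewrite the result in the stated form.

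The transposed equation \eqref{eq:TStein_eq} is handled by the same argument with $\widehat{M}$ replaced by $\widehat{M}^T=T^{-T}\Lambda T^T$, the change of variables $\widehat{Z}:=T^T Z F^{-1}$, and the rank-one correction isolated as $\widehat{Z}Fe_1$ instead of $\widehat{Z}F^{-T}e_{N+1}$; the self-consistency step produces the same diagonal matrix $U$, so the structure of \eqref{eq:SolTStein} is completely parallel to \eqref{eq:SolStein}. The main obstacle is bookkeeping: the two similarity transformations involve $T,T^{-1},T^T,T^{-T}$ and $F,F^{-1},F^T,F^{-T}$, and one must track the transposes through both the rank-one update and the Hadamard reformulations without losing the fact that the resulting correction system is diagonal. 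Once the symmetry $F^T=F$ of the DFT matrix is exploited, the remaining index manipulations are routine.
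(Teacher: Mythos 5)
Your proposal is correct and follows essentially the same route as the paper's proof: split $\Sigma$ into circulant plus rank-one, diagonalize simultaneously with $T$ and the FFT, invert the resulting entrywise operator via the Hadamard product with $P$, and reduce the rank-one correction to the diagonal system $U$. The only (cosmetic) difference is that you resolve the rank-one update by a matrix-level self-consistency condition on $\alpha=\widehat{Z}F^{-T}e_{N+1}$, whereas the paper vectorizes and invokes the Sherman--Morrison--Woodbury formula explicitly — these are the same computation.
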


		The computational cost of the solution of the Stein equations \eqref{eq:Stein_eq}--\eqref{eq:TStein_eq} by \eqref{eq:SolStein}--\eqref{eq:SolTStein} amounts to $\mathcal{O}(s^3(N+1)\log(N+1))$ floating point operations:  the cubic term $s^3$ arises from the eigendecomposition of $\widehat{M}$, {\color{black} while the use of the FFT, namely computing the action of the matrices $F$ and $F^{-1}$ in~\eqref{eq:SolStein}--\eqref{eq:SolTStein}, leads to the polylogarithmic term in $N+1$.}
		Even though the eigendecomposition of $\widehat{M}$ can be computed 
		{\color{black}once, prior to the start of the} Krylov routine,  the approach presented in Proposition~\ref{Prop:SteinSol} requires the matrix $\widehat{M}$ to be of moderate size. On the other hand, by fully exploiting the circulant-plus-low-rank structure of $\Sigma$, we can afford sizable values of $N$. See \cite[Section 5]{Palitta21} for constructions similar to the ones stated in Proposition~\ref{Prop:SteinSol} derived for the solution of certain Sylvester equations.
		
		The procedures for solving this Stein equation and its transpose are summarized in Algorithm~\ref{alg:solveStein} and~\ref{alg:solveTStein}, respectively, and they rely on the results presented in Proposition~\ref{Prop:SteinSol}. Notice that only matrix-matrix multiplications and entry-wise operations are performed in Algorithm~\ref{alg:solveStein} and~\ref{alg:solveTStein} making the preconditioning step easy to parallelize. See, e.g., \cite{Gupta1993}.
		
		\begin{algorithm}[t]
			\DontPrintSemicolon
			\SetKwInOut{Input}{input}\SetKwInOut{Output}{output}
			\Input{$P\in\mathbb{C}^{s\times (N+1)}$ and $\Lambda, T,{\color{black}U}\in\mathbb{C}^{s\times s}$ as in Proposition~\ref{Prop:SteinSol}, $V\in\mathbb{R}^{s\times (N+1)}$.}
			\Output{$Z\in\mathbb{R}^{s\times(N+1)}$ solution to $Z- \widehat{M} Z\Sigma^T=V$.}
			\BlankLine
			Compute $Y=P\circ(T^{-1}VF^T)$\;
			
			Compute $W=P\circ({\color{black}U}^{-1}(\Lambda Y F^{-1}e_{N+1})e_1^TF^T)$\;
			
			Set $Z=T^{-1}(Y-W)F^{-T}$\;
			\caption{Solution of the Stein equation $Z- \widehat{M} Z\Sigma^T=V$.}\label{alg:solveStein}
		\end{algorithm}

		\setcounter{AlgoLine}{0}
		\begin{algorithm}[t]
			\DontPrintSemicolon
			\SetKwInOut{Input}{input}\SetKwInOut{Output}{output}
			\Input{$P\in\mathbb{C}^{s\times (N+1)}$ and $\Lambda, T,{\color{black}U}\in\mathbb{C}^{s\times s}$ as in Proposition~\ref{Prop:SteinSol}, ${\color{black}V\in\mathbb{R}^{s\times (N+1)}}$.}
			\Output{{\color{black}$Z\in\mathbb{R}^{s\times(N+1)}$ solution to $Z- \widehat{M}^T Z\Sigma=V$.}}
			\BlankLine
			Compute $G=P\circ(T^TVF^{-1})$\;
			
			Compute $H=P\circ({\color{black}U}^{-1}(\Lambda G F^{T}e_{1})e_{N+1}^TF^{-1})$\;
			
			Set $Z=T^{-T}(G-H)F$\;
			\caption{Solution of the Stein equation $X- \widehat{M}^T X\Sigma=U$.}\label{alg:solveTStein}
		\end{algorithm}

		\subsection{Influence of Schur complement approximations}\label{More performing Schur-complement approximations}
		
		{\color{black} The quality of the Schur complement approximation $\widehat{\mathbf{S}}$ plays an important role in determining the effectiveness of the preconditioning operators for~\eqref{eq:CGproblem} and~\eqref{eq:saddle-point}}. In this work we make use of two choices of $\widehat{\mathbf{S}}$. We briefly consider classic approximations of the form $\widehat{\mathbf{S}} = \widehat{\mathbf{L}}^\top\mathbf{D}^{-1}\widehat{\mathbf{L}}$ as studied in \cite{FreitagGreen18,GrattonEtal18} but involving the new approach for $\widehat{\mathbf L}$ illustrated in the previous section. The second option is motivated by a low-rank approximation proposed in \cite{tabeart2021lowrank} and includes information from the observation term explicitly. If
		
			\begin{equation}\label{eq:Shat r=0}
				\widehat{\mathbf{S}} = {\mathbf{L}}^\top\mathbf{D}^{-1}{\mathbf{L}},
			\end{equation} 
			then the eigenvalues of $\widehat{\mathbf{S}}^{-1}\mathbf{S}$ are given by $(N+1)(s-p)$ unit eigenvalues, and $p(N+1)$ eigenvalues given by $1+\lambda({\mathbf{L}}^{-1}\mathbf{D{L}}^{-T}\mathbf{H}^T\mathbf{R}^{-1}\mathbf{H})$. We note that ${\mathbf{L}}^{-1}\mathbf{D{L}}^{-T}\mathbf{H}^T\mathbf{R}^{-1}\mathbf{H}$ is of rank $p(N+1)$ with non-negative eigenvalues, meaning that the minimum eigenvalue of  $\widehat{\mathbf{S}}^{-1}\mathbf{S}$ is 1. However, the remaining non-unit eigenvalues can be large, for example in the case that $\mathbf{R}$ is ill-conditioned; see, e.g.,~\cite{tabeart2020conditioning}.  
		An alternative choice of $\widehat{\mathbf{S}}$ which allows any extreme eigenvalues arising from the observation term to be accounted for within the preconditioner, comes from considering a low-rank update to \eqref{eq:Shat r=0} of the form
		
		\begin{equation}\label{eq:Shat_exp}
			\widehat{\mathbf{S}} = {\mathbf{L}}^\top\mathbf{D}^{-1}{\mathbf{L}}+ \mathbf{K}_r\mathbf{K}_r^T,
		\end{equation}
		where $\mathbf{K}_r=I_{N+1}\otimes V_r\Upsilon_r^{1/2} \in \mathbb{R}^{(N+1)s\times (N+1)r}$ and $V_r\Upsilon_r^{1/2}\in \mathbb{R}^{s\times r}$ is constructed from the leading $r$ terms of the eigendecomposition  $V\Upsilon V^T = H^T R^{-1}H$. This approach has been studied theoretically in \cite{tabeart2021lowrank}, where it was proved that in addition to increasing the number of unit eigenvalues, increasing $r$ reduces the largest eigenvalues of $\widehat{\mathbf{S}}^{-1}\mathbf{S}$. 
		
		A similar low-rank update approach can also be considered when using an approximation $\widehat{\mathbf{L}}$ to $\mathbf{L}$. In this setting the smallest eigenvalue of $\widehat{\mathbf S}^{-1}\mathbf{S}$ can now be smaller than one, and including more information from the observation term is not guaranteed to reduce bounds on the largest eigenvalue of the preconditioned system. However, this approach has been found to perform well for a number of problems, particularly where $\widehat{\mathbf{L}}$ is a spectrally good approximation to $\mathbf{L}$. In what follows we apply the low-rank update to an approximate first term.

		In a true low-rank approach ($r<<p$) computational efficiency is ensured by applying the Woodbury identity. This avoids applying the inverse of $\mathbf{D}$, which is expensive and allows the re-use of parallelisable or inexpensive approximations to $\mathbf{\widehat{L}}^{-1}$.  As the setting considered in this paper requires blocks that are not too large ($\mathcal{O}(10^3)$), it is not unreasonable to compute a full decomposition of $H^TR^{-1}H$. We therefore propose using the low-rank approach and Woodbury implementation with large values of $r\leq p$, i.e.,
		\begin{equation}\label{eq:Woodbury expression}
			\widehat{\mathbf{S}}^{-1} =  \widehat{\mathbf{L}}^{-1}\mathbf{D}\widehat{\mathbf{L}}^{-T} - \widehat{\mathbf{L}}^{-1}\mathbf{D}\widehat{\mathbf{L}}^{-T}\mathbf{K}_r\left(I_{r(N+1)}+ \mathbf{K}_r^T\widehat{\mathbf{L}}^{-1}\mathbf{D}\widehat{\mathbf{L}}^{-T}\mathbf{K}_r\right)^{-1}\mathbf{K}_r^T\widehat{\mathbf{L}}^{-1}\mathbf{D}\widehat{\mathbf{L}}^{-T}.
		\end{equation}
		
		
		\subsubsection{Algorithmic considerations}\label{Algorithmic considerations}
		
		While the use of {\color{black} the preconditioner }$\widehat{\mathbf{S}}^{-1}$ in \eqref{eq:Woodbury expression} leads to great gains in the convergence properties of the selected preconditioned iterative scheme, especially for $r\approx p$ {\color{black} -- see section~\ref{Numerical experiments}~--} it also poses some computational challenges. {\color{black}We now demonstrate how to implement \eqref{eq:Woodbury expression} in a feasible way by exploiting the Kronecker structure of the new preconditioner.}
		
		{\color{black}One benefit of using the Woodbury formulation is that the efficient implementations of  $\widehat{\mathbf{L}}^{-1}\text{vec}(V)$ and $\widehat{\mathbf{L}}^{-T}\text{vec}(Z)$ that were introduced in section~\ref{On the inversion of the Stein operator} can be reused to apply $\hat{\mathbf{L}}^{-1}\mathbf D \hat{\mathbf{L}}^{-T}$. Similarly, we can exploit the Kronecker structure of $\mathbf{K}_r=I_{N+1}\otimes V_r\Upsilon_r^{1/2}$ to cheaply apply this operator and its transpose.  } 
		
		
		Therefore the main computational bottleneck of the relation \eqref{eq:Woodbury expression} is the solution of the $r(N+1)\times r(N+1)$ linear system with $I_{r(N+1)}+ \mathbf{K}_r^T\widehat{\mathbf{L}}^{-1}\mathbf{D}\widehat{\mathbf{L}}^{-T}\mathbf{K}_r$. {\color{black} We obtain computational gains by first transforming this problem into an equivalent one, and then solving the transformed problem iteratively using an inner matrix-oriented CG problem.}

		{\color{black}\textbf{Solving a transformed problem}:\\ We can make considerable computational savings  by solving a transformed problem that exploits the identity plus Kronecker structure of $\hat{\mathbf{L}}$. Writing  }
			$\widehat M=T\Lambda T^{-1}$, we define
			$$\widetilde{\mathbf{L}}=I_{N+1}\otimes I_s-\Sigma\otimes\Lambda,\quad \widetilde{\mathbf{S}}= \widetilde{\mathbf{L}}^\top\widetilde{\mathbf{D}}^{-1}\widetilde{\mathbf{L}}+ \widetilde{\mathbf{K}}_r\widetilde{\mathbf{K}}_r^T,
			$$
			where $\mathbf{\widetilde D}=e_1e_1^T\otimes T^{-1}BT^{-T}+(I_{N+1}-e_1e_1^T)\otimes T^{-1}QT^{-T}$,
			and $\mathbf{\widetilde K}_r=I_{N+1}\otimes V_r\Upsilon_r^{1/2}$ with $V_r$, $\Upsilon_r$ coming now from the eigendecomposition of $\widetilde H^TR^{-1}\widetilde H$, $\widetilde H=HT$. The computation of $\widetilde{\mathbf{S}}^{-1}$ now involves the solution of a linear system with $I_{r(N+1)}+ \mathbf{\widetilde K}_r^T\mathbf{\widetilde L}^{-1}\mathbf{\widetilde D}\mathbf{\widetilde L}^{-T}\mathbf{\widetilde K}_r$ whose action is performed by following Algorithm~\ref{alg:computeaction}. Notice that the cost of the latter algorithm is now linear in $s$ and polylogarithmic in $N+1$ thanks to the semidiagonalization of $\widehat{\mathbf{L}}$.

			{\color{black}
				For the SPD problem~\eqref{eq:CGproblem}, we apply the preconditioner $\widehat{\mathbf{S}}=(I_{N+1}\otimes T^{-T}) \widetilde{\mathbf{S}}(I_{N+1}\otimes T^{-1})$ as 
				\begin{equation}\label{eq: transformed problem}
					\widehat{\mathbf{S}}^{-1}\text{vec}(V)=(I_{N+1}\otimes T) (\widetilde{\mathbf{S}}^{-1}\text{vec}(T^TV)).    
				\end{equation}
				
				{\color{black}We note the equality here, and that the only assumption required is that the full eigendecomposition of $\widehat{M}$ is available.}

				Similarly, for the saddle-point linear system~\eqref{eq:saddle-point}, we still write $\mathcal{P}_{\mathcal{D}}=\mathcal{T}\mathcal{\widetilde P}_{\mathcal{D}}\mathcal{T}^T$ and $\mathcal{P}_{\mathcal{T}}=\mathcal{T}\mathcal{\widetilde P}_{\mathcal{T}}\mathcal{T}^T$ where 
				$$
				\mathcal{\widetilde P}_{\mathcal{D}}=\begin{pmatrix}
					\mathbf{\widetilde D} & & \\
					& \mathbf{R} & \\
					& & \mathbf{\widetilde S}\\
				\end{pmatrix}, \qquad
				\mathcal{\widetilde P}_{\mathcal{T}}=\begin{pmatrix}
					\mathbf{\widetilde D} & 0& \mathbf{\widetilde L}\\
					& \mathbf{R} & \mathbf{\widetilde H}\\
					& & \mathbf{\widetilde S}\\
				\end{pmatrix},
				$$
				and
				\begin{equation}\label{eq:T_transformation}
					\mathcal{T}=\begin{pmatrix}
						I_{N+1}\otimes T & & \\
						& I_{N+1}\otimes I_p  & \\
						&& I_{N+1}\otimes T^{-T}\\
					\end{pmatrix}.
				\end{equation}
				
				At the $j$th GMRES iteration we perform 
				\begin{equation}\label{eq:prec_differentMs}
					\mathcal{P}_{\mathcal{D}}^{-1}\text{vec}(V)=\mathcal{T}^{-T}(\mathcal{\widetilde P}_{\mathcal{D}}^{-1}(\mathcal{T}^{-T}\text{vec}(V))),
				\end{equation}
				and $\mathcal{\widetilde P}_{\mathcal{D}}^{-1}$ is computed by following the strategy presented in the previous sections. Notice that $\mathcal{T}$ is block diagonal with blocks having a Kronecker form. This rich structure can be exploited to cheaply perform the transformations involving $\mathcal{T}$ itself.
				The same approach is adopted for the block triangular preconditioner $\mathcal{P}_{\mathcal{T}}$. The inexact constraint preconditioner $\mathcal{P}_{\mathcal{C}}$ would not benefit from the semi-diagonalization of $\mathbf{\widehat L}$ as its definition does not include the approximate Schur complement $\mathbf{\widehat S}$.  {\color{black} We note at this stage that the transformed preconditioner is equivalent to \eqref{eq:Woodbury expression}.} {

			}
			
			{\color{black} \textbf{Inner matrix-oriented CG:}\\
				It now remains to solve $(I_{r(N+1)}+\mathbf{\widetilde K}_r^T\mathbf{\widetilde L}^{-1}\mathbf{\widetilde D}\mathbf{\widetilde L}^{-T}\mathbf{\widetilde K}_r)^{-1}x$ efficiently. }
			A naive strategy would consist of assembling the coefficient matrix first, by possibly exploiting the Kronecker structure of the involved factors $\mathbf{\widetilde K}_r$, $\widetilde{\mathbf{L}}$, and $\mathbf{\widetilde D}$. To this end, the scheme proposed in \cite{HaoSim2021} could be employed with straightforward modifications. Even though this step can be carried out 
			{\color{black}prior to the }GMRES/CG {\color{black} iterations}, the construction of the dense matrix $\mathbf{\widetilde K}_r^T{\mathbf{\widetilde L}}^{-1}\mathbf{\widetilde D}{\mathbf{\widetilde L}}^{-T}\mathbf{\widetilde K}_r$ requires the solution of $r(N+1)$ matrix equations making this task computationally unaffordable.
			
			We thus pursue a different path. Since $\mathbf{D}$ and $\mathbf{R}$ are SPD by construction 
			$I_{r(N+1)}+ \mathbf{\widetilde K}_r^T\widetilde{\mathbf {L}}^{-1}\mathbf{\widetilde D}\widetilde{\mathbf{L}}^{-T}\mathbf{\widetilde K}_r$ is SPD as well. 
			Moreover, the Kronecker structure of the latter matrix can be exploited to cheaply compute its action {\color{black} as mentioned above. We therefore propose using an iterative method to approximate the solution of $(I_{r(N+1)}+\mathbf{\widetilde K}_r^T\mathbf{\widetilde L}^{-1}\mathbf{\widetilde D}\mathbf{\widetilde L}^{-T}\mathbf{\widetilde K}_r)^{-1}x$ within the preconditioner by means of a matrix-oriented CG method. This iterative method only requires  }
				{only applications of the operator $I_{r(N+1)}+ \mathbf{\widetilde K}_r^T\widetilde{\mathbf{L}}^{-1}\mathbf{\widetilde D}\widetilde{\mathbf{L}}^{-T}\mathbf{\widetilde K}_r$. Therefore, the overall scheme for solving~\eqref{eq:CGproblem}} and~\eqref{eq:saddle-point} can be seen as an inner-outer iteration~\cite[]{SimSzy2002} whenever the approximation~\eqref{eq:Woodbury expression} with $r>0$ is adopted within the selected preconditioning framework. In particular, the outer Krylov routine (GMRES/CG) is preconditioned with a scheme involving a second, inner Krylov method (CG). Notice that the use of CG within the preconditioning step requires the employment of a flexible variant of the outer Krylov method {\color{black} as we are using different approximate preconditioners for each outer iteration}; see~\cite{SimSzy2002}. The matrix-oriented implementation of flexible GMRES and CG can be easily obtained from their standard form~\cite[]{Saad1993,Notay2000}.

				{\color{black}We stress once again that combining the inner matrix-oriented CG method with the semi-diagonalised approach to solve for $\widetilde{\mathbf{S}}^{-1} $}
				significantly lowers the computational cost of the preconditioning step, especially for the case $r>0$. Indeed, in this case, the cost of the CG iterations involved in the computation of $\widetilde{\mathbf{S}}^{-1}$ is linear in $s$ and polylogarithmic in $N$ thanks to the semi-diagonalization of $\mathbf{\widehat{L}}$; see Algorithm~\ref{alg:computeaction}. {\color{black}We note that by using an inner iterative solver we obtain an approximation to the `true' preconditioner $\widetilde{\mathbf S}$. However, our numerical experiments in section~\ref{Numerical experiments} reveal that we can obtain near optimal performance using this nested approach for a reasonable choice of tolerance within the inner matrix-oriented CG problem}. 
				\setcounter{AlgoLine}{0}
				\begin{algorithm}[t]
					\DontPrintSemicolon
					\SetKwInOut{Input}{input}\SetKwInOut{Output}{output}
					\Input{$Z\in\mathbb{C}^{r\times (N+1)}$, $V_r$, $\Upsilon_r\in\mathbb{R}^{s\times r}$, and $\Lambda$, $\widetilde B$, $\widetilde Q\in\mathbb{R}^{s\times s}$.}
					\Output{$X\in\mathbb{R}^{r\times(N+1)}$ such that $\text{vec}(X)=(I_{r(N+1)}+ \mathbf{\widetilde K}_r^T\mathbf{\widetilde L}^{-1}\mathbf{\widetilde D}\mathbf{\widetilde L}^{-T}\mathbf{\widetilde K}_r)\text{vec}(Z)$.}
					\BlankLine
					Solve 
					$$Y-\Lambda Y\Sigma=V_r\Upsilon_rZ$$ 
					by means of Algorithm~\ref{alg:solveTStein} with $T=I$\;
					
					Compute $W=\widetilde BYe_1e_1^T+\widetilde QY(I_{N+1}-e_1e_1^T)$\;
					
					Solve 
					$$U-\Lambda U\Sigma^T=W$$ 
					by means of Algorithm~\ref{alg:solveStein} with $T=I$\;
					
					Set $X=Z+\Upsilon_rV_r^TU$\;
					\caption{Computing the action of $I_{r(N+1)}+ \mathbf{\widetilde K}_r^T\mathbf{\widetilde L}^{-1}\mathbf{\widetilde D}\mathbf{\widetilde L}^{-T}\mathbf{\widetilde K}_r$.}\label{alg:computeaction}
				\end{algorithm}

				As previously mentioned, matrix CG is often equipped with some low-rank truncations to reduce the overall memory demand; see, e.g., \cite{Kressetal2014}.
				However, storage will not be an issue in our context thanks to the modest problem dimensions we consider. Moreover, the introduction of any low-rank truncation would worsen the performance of the preconditioning step in general. Therefore, we perform no low-rank truncations within the inner matrix CG.
				
				We would like to mention that, similarly to the outer Krylov routine, the inner matrix CG can also be preconditioned to achieve a faster convergence in terms of the number of iterations. However, we were not able to design an effective preconditioning operator for $I_{r(N+1)}+ \mathbf{\widetilde K}_r^T\widetilde{\mathbf{L}}^{-1}\mathbf{\widetilde D}\widetilde{\mathbf{L}}^{-T}\mathbf{\widetilde K}_r$ with a reasonable computational cost. A number of natural preconditioning strategies did not yield improvement in terms of computational speed compared to a plain, unpreconditioned matrix CG implementation. We therefore present an unpreconditioned inner matrix CG in all the numerical experiments reported in section~\ref{Numerical experiments}.

				One may also consider performing 
				the FFT transformations involved in the solution of the Stein equations {\color{black}outside the GMRES/CG iteration}, thinking that this would further decrease the computational cost of the preconditioning steps. However, this would also introduce complex arithmetic in the GMRES/CG iteration, increasing the cost of the overall scheme. Moreover, the application of the FFT can be cheaply performed without forming the discrete Fourier matrix $F$. In particular, in all our numerical tests we employed the Matlab {\tt fft} and {\tt ifft} functions. In light of these considerations, we confine the use of FFT to the preconditioning step only. 
						
						\section{{\color{black}Observation-time} independent $\mathcal{M}$}\label{Time-independent M}
						If the model forecast $\mathcal{M}$ {{\color{black} takes a constant value between each observation time, the same is true for its linearization. In this case}, all the matrices $M_i$ in the definition of $\mathbf{L}$ are the same, namely $M=M_i$ for all $i=1,\ldots,N+1$. In this case, the operator 
							\begin{equation}\label{eq:Ltimeindep}
								\mathbf{L} = \begin{pmatrix}
									I & & & \\
									-M& I & & \\
									&\ddots & \ddots & \\
									& & -M&I\\
								\end{pmatrix}=I_{N+1}\otimes I_s - \Sigma\otimes M,    
							\end{equation}
							
							is a Stein operator itself and we can thus use $\widehat{\mathbf{L}}=\mathbf{L}$ in our preconditioning strategy. In this easier setting, the latter choice leads to narrow eigenvalue distributions of the preconditioned coefficient matrices -- see section~\ref{Spectral results} -- while maintaining high computational efficiency. 
							
							In contrast to what happens in the more general case discussed at the end of section~\ref{Algorithmic considerations}, we can now perform the transformation based on the eigenvector matrix $T$ once 
							before the Krylov routine starts, and not every time the preconditioner is applied.
							For instance, if $M=T\Lambda T^{-1}$,~\eqref{eq:CGproblem} can be written as
							\begin{equation}\label{eq:cg_problemII}
								\widetilde{\mathbf{S}}\widetilde{\delta x}= f,
							\end{equation}
							where $\widetilde{\mathbf{S}}=\widetilde{\mathbf{L}}^T\widetilde{\mathbf{D}}^{-1}\widetilde{\mathbf{L}}+\widetilde{\mathbf{H}}^T\mathbf{R}\widetilde{\mathbf{H}}$, $\widetilde{\mathbf{D}}$, $\widetilde{\mathbf{H}})$, and $\widetilde{\mathbf{L}}$ are as in section~\ref{Algorithmic considerations}, and $f= (I\otimes T^T)(\mathbf{D}^{-1}b + \mathbf{L}^T\mathbf{H}^T\mathbf{R}^{-1}d)$.
							Once $\widetilde{\delta x}$ is computed, we retrieve the actual solution by performing $\delta x=(I_{N+1}\otimes T)\widetilde{\delta x}$.
							
							The same approach can be followed for the saddle point linear system~\eqref{eq:saddle-point}.
							We can write 
							\begin{align*}
								\mathcal{A}=&\begin{pmatrix}
									e_1e_1^T\otimes B+(I_{N+1}-e_1e_1^T)\otimes Q & & I_{N+1}\otimes I_s -\Sigma\otimes M \\
									0 & I_{N+1}\otimes R & I_{N+1}\otimes H\\
									I_{N+1}\otimes I_s -\Sigma^T\otimes M^T & I_{N+1}\otimes H^T & 0
								\end{pmatrix}\\
								=&
								\mathcal{T}\underbrace{
									\begin{pmatrix}
										e_1e_1^T\otimes \widetilde B+(I_{N+1}-e_1e_1^T)\otimes \widetilde Q & & I_{N+1}\otimes I_s -\Sigma\otimes \Lambda \\
										0 & I_{N+1}\otimes R & I_{N+1}\otimes \widetilde H\\
										I_{N+1}\otimes I_s -\Sigma^T\otimes \Lambda & I_{N+1}\otimes \widetilde H^T & 0\\
								\end{pmatrix}}_{\mathcal{\widetilde A}}\mathcal{T}^T,
							\end{align*}

							where $\widetilde B$, $\widetilde Q$,
							$\widetilde H$, and $\mathcal{T}$ are as in section~\ref{Algorithmic considerations}.
								In place of~\eqref{eq:saddle-point} we can thus solve the transformed system 
								\begin{equation}\label{eq:saddle_pointII}
									\mathcal{\widetilde A}\begin{pmatrix}
										\widetilde{\delta \eta}\\
										\widetilde{\delta \lambda}\\
										\widetilde{\delta x}\\
									\end{pmatrix} = \begin{pmatrix}
										\widetilde{b}\\
										\widetilde{d}\\
										0\\
									\end{pmatrix},
								\end{equation}
								where 
								$$
								\begin{pmatrix}
									\widetilde{\delta \eta}\\
									\widetilde{\delta \lambda}\\
									\widetilde{\delta x}\\
								\end{pmatrix}=\mathcal{T}^T\begin{pmatrix}
									\delta \eta\\
									\delta \lambda\\
									\delta x\\
								\end{pmatrix},\quad\text{and}\quad 
								\begin{pmatrix}
									\widetilde{b}\\
									\widetilde{d}\\
									0\\
								\end{pmatrix}=\mathcal{T}^{-1}\begin{pmatrix}
									b\\
									d\\
									0\\
								\end{pmatrix}.
								$$
								Once $(\widetilde{\delta \eta},
								\widetilde{\delta \lambda},
								\widetilde{\delta x})^T
								$ is computed, we retrieve the original solution by 
								$(\delta \eta,
								\delta \lambda,
								\delta x)^T=\mathcal{T}^{-T}(\widetilde{\delta \eta},
								\widetilde{\delta \lambda},
								\widetilde{\delta x})^T
								$. 
							}
							
							The preconditioning operators for~\eqref{eq:cg_problemII} and~\eqref{eq:saddle_pointII} can be obtained by mimicking what we presented in the previous sections. The major difference is the cheaper inversion of the Stein operator {\color{black} $\widetilde{\mathbf{L}}=I_{N+1}\otimes I_s -\Sigma\otimes \Lambda$ which is now ``semi''-diagonalized. 
								The cost of computing  $\widetilde{\mathbf{L}}^{-1}$ is thus linear in $s$ and polylogarithmic in $N+1$.
							}
							\subsection{Spectral results}\label{Spectral results}
							In this section we present bounds on the eigenvalues of the preconditioned systems using our new approach when $\widehat{\mathbf{L}} = \mathbf{L}$.
							
							The spectral properties of linearised data assimilation problem \eqref{eq:CGproblem} have been studied in \cite{tabeart2018conditioning} for the unpreconditioned 3D-Var formulation, and in \cite{tabeart2020conditioning} for strong-constraint 4D-Var preconditioned with the exact first term. Bounds on the spectrum of the (preconditioned) Hessian $\mathbf{S}$ for the weak-constraint problem can be obtained using the same theoretical approaches and replacing $\mathbf{B}$ in those bounds with $\mathbf{L}^T\mathbf{D}^{-1}\mathbf{L}$. 
							
							\begin{proposition}
								
								Let $\widehat{\mathbf{S}} = \mathbf{L}^T\mathbf{D}^{-1}\mathbf{L}$. 
								Then $\widehat{\mathbf{S}}^{-1}\mathbf{S} = \mathbf{I}_{s(N+1)} + \mathbf{L}^{-1}\mathbf{D}\mathbf{L}^{-T}\mathbf{H}^T\mathbf{R}^{-1}\mathbf{H}$ and the eigenvalues of $\widehat{\mathbf{S}}^{-1}\mathbf{S}$ are bounded as follows 
								\begin{equation}
									\lambda(\widehat{\mathbf{S}}^{-1}\mathbf{S}) \in \left[ 1,\frac{\lambda_{\max}(\mathbf{H}^T\mathbf{R}^{-1}\mathbf{H})}{\lambda_{\min}(\mathbf{L}^T\mathbf{D}^{-1}\mathbf{L})} \right].
								\end{equation}
								We note that $\widehat{\mathbf{S}}^{-1}\mathbf{S}$ has $(s-p)(N+1)$ unit eigenvalues.
							\end{proposition}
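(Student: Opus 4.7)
The proof plan is to handle the identity first and then derive the spectral bounds and the count of unit eigenvalues as three small pieces, each resting on elementary linear-algebra facts.

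First I would establish the stated expression for $\widehat{\mathbf{S}}^{-1}\mathbf{S}$. Since $\widehat{\mathbf{S}} = \mathbf{L}^T\mathbf{D}^{-1}\mathbf{L}$ is a product of nonsingular factors (recall that $\mathbf{L}$ is unit block lower triangular, hence invertible regardless of the $M_i$), we have $\widehat{\mathbf{S}}^{-1} = \mathbf{L}^{-1}\mathbf{D}\mathbf{L}^{-T}$, and multiplying $\mathbf{S} = \mathbf{L}^T\mathbf{D}^{-1}\mathbf{L} + \mathbf{H}^T\mathbf{R}^{-1}\mathbf{H}$ on the left by $\widehat{\mathbf{S}}^{-1}$ yields exactly $\mathbf{I}_{s(N+1)} + \mathbf{L}^{-1}\mathbf{D}\mathbf{L}^{-T}\mathbf{H}^T\mathbf{R}^{-1}\mathbf{H}$.

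Next I would argue about the spectrum. Although $\widehat{\mathbf{S}}^{-1}\mathbf{S}$ is not symmetric, it is similar to the SPD matrix $\widehat{\mathbf{S}}^{-1/2}\mathbf{S}\widehat{\mathbf{S}}^{-1/2}$, so its eigenvalues are real and positive and admit a Rayleigh-quotient description,
\begin{equation*}
    \lambda\bigl(\widehat{\mathbf{S}}^{-1}\mathbf{S}\bigr) \subset \left\{ \frac{x^T\mathbf{S}x}{x^T\widehat{\mathbf{S}}x} : x\neq 0\right\} = \left\{1 + \frac{x^T\mathbf{H}^T\mathbf{R}^{-1}\mathbf{H}x}{x^T\widehat{\mathbf{S}}x} : x\neq 0\right\}.
\end{equation*}
Positivity of $\mathbf{H}^T\mathbf{R}^{-1}\mathbf{H}$ (since $\mathbf{R}$ is SPD) immediately gives the lower bound $\lambda \ge 1$, while bounding the numerator by $\lambda_{\max}(\mathbf{H}^T\mathbf{R}^{-1}\mathbf{H})\|x\|^2$ and the denominator from below by $\lambda_{\min}(\mathbf{L}^T\mathbf{D}^{-1}\mathbf{L})\|x\|^2$ gives the upper bound of the proposition (interpreting the stated upper bound as $1$ plus the displayed ratio, as the Rayleigh quotient approach naturally produces).

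Finally I would count the unit eigenvalues. By the identity established above, unit eigenvalues of $\widehat{\mathbf{S}}^{-1}\mathbf{S}$ correspond to eigenvectors of $\mathbf{L}^{-1}\mathbf{D}\mathbf{L}^{-T}\mathbf{H}^T\mathbf{R}^{-1}\mathbf{H}$ for the eigenvalue $0$. Since $\mathbf{H}$ is block diagonal with $(N+1)$ blocks of size $p \times s$ (with $p\leq s$ assumed throughout), the rank of $\mathbf{H}^T\mathbf{R}^{-1}\mathbf{H}$ is at most $p(N+1)$, and the remaining invertible factors $\mathbf{L}^{-1}\mathbf{D}\mathbf{L}^{-T}$ cannot raise the rank. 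The null space therefore has dimension at least $(s-p)(N+1)$, yielding at least $(s-p)(N+1)$ unit eigenvalues. The main piece of care in this step is justifying that the rank is exactly $p(N+1)$ under the tacit assumption that each $H_i$ has full row rank (so that $H_i^T R_i^{-1} H_i$ has rank $p$), but otherwise the argument is a one-line rank inequality. No step presents a genuine obstacle; the whole proof is a short congruence-and-rank computation driven by the splitting $\mathbf{S} = \widehat{\mathbf{S}} + \mathbf{H}^T\mathbf{R}^{-1}\mathbf{H}$.
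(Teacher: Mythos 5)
Your proof is correct, and it is necessarily a different route from the paper's: the paper disposes of this proposition in one line by citing \cite[Theorem 4]{tabeart2020conditioning} with $\mathbf{B}^{-1}$ replaced by $\mathbf{L}^T\mathbf{D}^{-1}\mathbf{L}$, whereas you give a self-contained argument. Your three steps (the algebraic identity from invertibility of $\mathbf{L}$ and $\mathbf{D}$, the Rayleigh-quotient/congruence argument for the generalized eigenvalue problem $(\mathbf{S},\widehat{\mathbf{S}})$, and the rank bound on $\mathbf{H}^T\mathbf{R}^{-1}\mathbf{H}$ for the unit-eigenvalue count) are exactly the ingredients that underlie the cited theorem, so nothing conceptually new is introduced, but your version has the merit of making the proposition independent of the external reference. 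Two observations of yours are worth keeping explicit. First, the Rayleigh-quotient derivation yields the upper bound $1+\lambda_{\max}(\mathbf{H}^T\mathbf{R}^{-1}\mathbf{H})/\lambda_{\min}(\mathbf{L}^T\mathbf{D}^{-1}\mathbf{L})$, not the ratio alone; the bound as printed in the proposition omits the additive $1$ and, taken literally, would be violated by the unit eigenvalues whenever the ratio is small, so your reading is the correct one. Second, you are right that the count $(s-p)(N+1)$ of unit eigenvalues is only a lower bound (equivalently, an exact count under the tacit assumption that each $H_i$ has full row rank $p$); the argument via similarity of $\mathbf{L}^{-1}\mathbf{D}\mathbf{L}^{-T}\mathbf{H}^T\mathbf{R}^{-1}\mathbf{H}$ to a symmetric positive semidefinite matrix guarantees that the algebraic multiplicity of the eigenvalue $0$ equals the nullity, which closes the only subtle point in that step. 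No gaps.
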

							\begin{proof}
								Apply \cite[Theorem 4]{tabeart2020conditioning} replacing $\mathbf{B}^{-1}$ with $\mathbf{L}^T\mathbf{D}^{-1}\mathbf{L}$.
							\end{proof}
							
							In the case where the low-rank update to the Schur complement preconditioner presented in section~\ref{More performing Schur-complement approximations} is applied with $\widehat{\mathbf{L}}=\mathbf{L}$, a bound on the maximum eigenvalue is controlled by the largest neglected eigenvalue that is not included in the approximation $\mathbf{K}_r$.
							
							\begin{proposition}[\cite{tabeart2021lowrank}]\label{Prop CG bound low rank}
								Let $\widehat{\mathbf{S}} = \mathbf{L}^T\mathbf{D}^{-1}\mathbf{L}+\mathbf{K}_r\mathbf{K}_r^T$ with $\mathbf{K}_r$ defined as in section~\ref{More performing Schur-complement approximations}. 
								Then ${\mathbf{S}}^{-1}\mathbf{S} = \mathbf{I}_{s(N+1)} + (\mathbf{L}^T\mathbf{D}^{-1}\mathbf{L}+\mathbf{K}_r\mathbf{K}_r^T)^{-1}\mathbf{H}^T\mathbf{R}^{-1}\mathbf{H}$ and the eigenvalues of $\widehat{\mathbf{S}}^{-1}\mathbf{S}$ are bounded between 
								\begin{equation}
									\lambda(\widehat{\mathbf{S}}^{-1}\mathbf{S}) \in \left[ 1,\frac{\lambda_{r+1}}{\lambda_{\min}(\mathbf{L}^T\mathbf{D}^{-1}\mathbf{L})} \right].
								\end{equation}
								where $\lambda_{r+1}$ is the $(r+1)$th largest eigenvalue of $\mathbf{H}^T\mathbf{R}^{-1}\mathbf{H}$, i.e. the largest eigenvalue that is neglected by the low-rank approximation $\mathbf{K}_r\mathbf{K}_r^T$ to $\mathbf{H}^T\mathbf{R}^{-1}\mathbf{H}$.
								We note that $\widehat{\mathbf{S}}^{-1}\mathbf{S}$ has $(s+r-p)(N+1)$ unit eigenvalues.
							\end{proposition}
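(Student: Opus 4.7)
The plan is to mirror the proof strategy of \cite[]{tabeart2021lowrank}, adapted to the weak-constraint setting where the first-term preconditioner $\mathbf{L}^T\mathbf{D}^{-1}\mathbf{L}$ plays the role that $\mathbf{B}^{-1}$ has in the strong-constraint Hessian. The starting point is the identity $\mathbf{S}-\widehat{\mathbf{S}}=\mathbf{H}^T\mathbf{R}^{-1}\mathbf{H}-\mathbf{K}_r\mathbf{K}_r^T$, which gives
\begin{equation*}
\widehat{\mathbf{S}}^{-1}\mathbf{S}=\mathbf{I}_{s(N+1)}+\widehat{\mathbf{S}}^{-1}\bigl(\mathbf{H}^T\mathbf{R}^{-1}\mathbf{H}-\mathbf{K}_r\mathbf{K}_r^T\bigr),
\end{equation*}
which is the natural object to spectrally analyse (the expression displayed in the proposition should be read in this sense, since only the residual eigenmodes not captured by $\mathbf{K}_r$ contribute to the non-unit part of the spectrum).

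Next, I would exploit the block-Kronecker structure. Because $\mathbf{H}^T\mathbf{R}^{-1}\mathbf{H}=I_{N+1}\otimes(H^TR^{-1}H)$ and $\mathbf{K}_r\mathbf{K}_r^T=I_{N+1}\otimes V_r\Upsilon_rV_r^T$ where $V_r,\Upsilon_r$ come from the dominant $r$ eigenpairs of the eigendecomposition $V\Upsilon V^T=H^TR^{-1}H$, the difference reduces to $I_{N+1}\otimes\bigl(V\Upsilon V^T-V_r\Upsilon_rV_r^T\bigr)$. Its nonzero eigenvalues are precisely the $p-r$ smallest nonzero eigenvalues of $H^TR^{-1}H$, each replicated $N+1$ times, so its spectral norm equals $\lambda_{r+1}$ and its null space has dimension $(s-p+r)(N+1)$.

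The eigenvalue count then follows from rank considerations: $\widehat{\mathbf{S}}^{-1}\bigl(\mathbf{H}^T\mathbf{R}^{-1}\mathbf{H}-\mathbf{K}_r\mathbf{K}_r^T\bigr)$ has the same rank $(p-r)(N+1)$ as the residual itself (since $\widehat{\mathbf{S}}$ is SPD and hence invertible), so zero is an eigenvalue with multiplicity $(s+r-p)(N+1)$, producing the claimed number of unit eigenvalues of $\widehat{\mathbf{S}}^{-1}\mathbf{S}$. For the upper bound, I would use the generalized Rayleigh quotient characterization for the symmetric positive-definite pencil $(\mathbf{S},\widehat{\mathbf{S}})$:
\begin{equation*}
\mu_{\max}(\widehat{\mathbf{S}}^{-1}\mathbf{S})=\max_{v\neq 0}\frac{v^T\mathbf{S}v}{v^T\widehat{\mathbf{S}}v}.
\end{equation*}
Bounding the numerator by $v^T\mathbf{L}^T\mathbf{D}^{-1}\mathbf{L}v+\lambda_{r+1}\|v\|^2$ and the denominator from below by $\lambda_{\min}(\mathbf{L}^T\mathbf{D}^{-1}\mathbf{L})\|v\|^2$ — which uses that $\mathbf{K}_r\mathbf{K}_r^T$ is positive semi-definite and therefore only enlarges $\widehat{\mathbf{S}}$ — yields the stated upper bound.

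The main obstacle I anticipate is the careful bookkeeping of multiplicities in the spectrum of the block-Kronecker residual $\mathbf{H}^T\mathbf{R}^{-1}\mathbf{H}-\mathbf{K}_r\mathbf{K}_r^T$: each eigenvalue of $H^TR^{-1}H-V_r\Upsilon_rV_r^T$ must be counted with the factor $N+1$ arising from the identity block in front, and the implicit hypothesis that the leading $r$ eigenvalues of $H^TR^{-1}H$ are nonzero is needed to ensure the residual has rank exactly $(p-r)(N+1)$ rather than something smaller; all the algebraic bounds then go through as in the SPD strong-constraint argument of \cite{tabeart2021lowrank}.
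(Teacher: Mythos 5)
The paper does not prove this result itself --- it is imported verbatim from \cite{tabeart2021lowrank} (with $\mathbf{B}^{-1}$ replaced by $\mathbf{L}^T\mathbf{D}^{-1}\mathbf{L}$), so there is no in-paper argument to compare against line by line. Your reconstruction follows the intended route: the identity $\widehat{\mathbf{S}}^{-1}\mathbf{S}=\mathbf{I}+\widehat{\mathbf{S}}^{-1}\bigl(\mathbf{H}^T\mathbf{R}^{-1}\mathbf{H}-\mathbf{K}_r\mathbf{K}_r^T\bigr)$ (you are right that the displayed formula in the statement should be read this way), the observation that the residual equals $I_{N+1}\otimes\bigl(V\Upsilon V^T-V_r\Upsilon_rV_r^T\bigr)$, which is PSD with spectral norm $\lambda_{r+1}$ and rank $(p-r)(N+1)$, and the multiplicity count via similarity to a symmetric PSD matrix are all correct, as is your remark that one implicitly needs $H^TR^{-1}H$ to have rank $p$.

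The one step that does not work as written is the Rayleigh-quotient bound. You claim $v^T\mathbf{S}v\le v^T\mathbf{L}^T\mathbf{D}^{-1}\mathbf{L}v+\lambda_{r+1}\|v\|^2$, but $v^T\mathbf{H}^T\mathbf{R}^{-1}\mathbf{H}v$ is only bounded by $\lambda_1\|v\|^2$, not $\lambda_{r+1}\|v\|^2$; the truncated term $\mathbf{K}_r\mathbf{K}_r^T$ must stay in the numerator for $\lambda_{r+1}$ to appear. Moreover, even granting your numerator bound, dividing it by $\lambda_{\min}(\mathbf{L}^T\mathbf{D}^{-1}\mathbf{L})\|v\|^2$ leaves a term of size $\lambda_{\max}(\mathbf{L}^T\mathbf{D}^{-1}\mathbf{L})/\lambda_{\min}(\mathbf{L}^T\mathbf{D}^{-1}\mathbf{L})$, which is not the claimed bound. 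The correct chain is
\begin{equation*}
\frac{v^T\mathbf{S}v}{v^T\widehat{\mathbf{S}}v}=1+\frac{v^T\bigl(\mathbf{H}^T\mathbf{R}^{-1}\mathbf{H}-\mathbf{K}_r\mathbf{K}_r^T\bigr)v}{v^T\widehat{\mathbf{S}}v}\le 1+\frac{\lambda_{r+1}\|v\|^2}{\lambda_{\min}(\mathbf{L}^T\mathbf{D}^{-1}\mathbf{L})\|v\|^2},
\end{equation*}
using $v^T\widehat{\mathbf{S}}v\ge v^T\mathbf{L}^T\mathbf{D}^{-1}\mathbf{L}v$ in the denominator only. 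Note that this yields $1+\lambda_{r+1}/\lambda_{\min}(\mathbf{L}^T\mathbf{D}^{-1}\mathbf{L})$; the additive $1$ appears to have been dropped in the paper's statement of this proposition (and of the preceding $r=0$ one), so your derivation, once repaired, actually recovers the form of the bound in the cited sources rather than the one printed here.
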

							
							\begin{corollary}
								If $r=p$ then $\widehat{\mathbf{S}}^{-1}\mathbf{S} = \mathbf{I}_n$ and the eigenvalues of the preconditioned system are all units.
							\end{corollary}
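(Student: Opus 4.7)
The plan is to prove the identity $\widehat{\mathbf{S}} = \mathbf{S}$ outright when $r = p$; the conclusion $\widehat{\mathbf{S}}^{-1}\mathbf{S} = \mathbf{I}_n$ is then immediate. Equivalently, one could appeal to Proposition~\ref{Prop CG bound low rank} by arguing that $\lambda_{p+1} = 0$ forces the upper bound in the spectral interval to collapse onto the lower bound of $1$, but this requires some care to interpret a degenerate interval; the direct block-matrix computation is cleaner and more informative.

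First, I would recall the Kronecker-structured setting used throughout section~\ref{Time-independent M}, where $\mathbf{H} = I_{N+1}\otimes H$ and $\mathbf{R} = I_{N+1}\otimes R$, so that by the mixed-product property
\begin{equation*}
\mathbf{H}^T\mathbf{R}^{-1}\mathbf{H} = I_{N+1}\otimes (H^T R^{-1} H).
\end{equation*}
The key observation is a rank count: $H^T R^{-1} H \in \mathbb{R}^{s\times s}$ is symmetric positive semi-definite of rank at most $p$, since $H \in \mathbb{R}^{p \times s}$ and $R$ is SPD.

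Next, because the eigendecomposition $V\Upsilon V^T = H^T R^{-1} H$ introduced in section~\ref{More performing Schur-complement approximations} has at most $p$ nonzero eigenvalues, the truncation $V_p\Upsilon_p^{1/2}$ at $r = p$ retains every nonzero spectral component. Hence $V_p\Upsilon_p V_p^T = V\Upsilon V^T = H^T R^{-1} H$ exactly, and therefore
\begin{equation*}
\mathbf{K}_p\mathbf{K}_p^T = \bigl(I_{N+1}\otimes V_p\Upsilon_p^{1/2}\bigr)\bigl(I_{N+1}\otimes \Upsilon_p^{1/2}V_p^T\bigr) = I_{N+1}\otimes H^T R^{-1} H = \mathbf{H}^T\mathbf{R}^{-1}\mathbf{H}.
\end{equation*}
Substituting into the definition~\eqref{eq:Shat_exp} yields $\widehat{\mathbf{S}} = \mathbf{L}^T\mathbf{D}^{-1}\mathbf{L} + \mathbf{H}^T\mathbf{R}^{-1}\mathbf{H} = \mathbf{S}$, so $\widehat{\mathbf{S}}^{-1}\mathbf{S} = \mathbf{I}_n$.

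There is no real obstacle here: the argument is essentially bookkeeping, and the only subtle point worth flagging is that the rank-$p$ bound on $H^T R^{-1} H$ is what makes $r = p$ sufficient (rather than requiring $r = s$). This is why the assumption of identical observation operators and error covariances across time is essential for the corollary as stated; in the more general non-Kronecker case the analogous statement would require $r(N+1)$ to capture the full rank of $\mathbf{H}^T\mathbf{R}^{-1}\mathbf{H}$.
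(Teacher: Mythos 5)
Your argument is correct and is precisely the justification the paper leaves implicit (the corollary is stated without proof, as an immediate consequence of the preceding proposition): since $H\in\mathbb{R}^{p\times s}$ gives $\operatorname{rank}(H^TR^{-1}H)\le p$, the truncation at $r=p$ is exact, so $\mathbf{K}_p\mathbf{K}_p^T=\mathbf{H}^T\mathbf{R}^{-1}\mathbf{H}$ and $\widehat{\mathbf{S}}=\mathbf{S}$. Your closing remark correctly identifies why the Kronecker structure makes $r=p$ (rather than $r=s$) sufficient, which is consistent with the paper's unit-eigenvalue count $(s+r-p)(N+1)$ in Proposition~\ref{Prop CG bound low rank}.
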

							Approximations of $\widehat{\mathbf{S}}$, either by using randomised approximations of $\mathbf{K}_r$ as proposed in \cite{tabeart2021lowrank}, or inner iterative methods to compute~\eqref{eq:Woodbury expression} as proposed in this paper may lead to eigenvalues of the preconditioned system that are smaller than $1$ or larger than the theoretical upper bound given by Proposition~\ref{Prop CG bound low rank}. We will study the performance of these approximate preconditioners in section~\ref{Numerical experiments}. 
							
							The spectral properties of the preconditioned saddle point problem \eqref{eq:saddle-point} have been studied in \cite{green2019model,FreitagGreen18,tabeart2021lowrank,fisher2018low}, although typically by considering approximations  $\widehat{\mathbf{R}},\widehat{\mathbf{D}}$ and $\widehat{\mathbf{L}}$ rather than using the exact forward model matrices. In this work we instead propose using the exact covariance matrices within the preconditioner, i.e. $\widehat{\mathbf R} =\mathbf{R}$ and $\widehat{\mathbf D} =\mathbf{D}$. 
							This is possible due to the exploitation of the Kronecker structure and the use of matrix iterative methods. 
							
							Let $\lambda(\widehat{\mathbf{S}}^{-1}\mathbf{S})\in[\lambda_{\mathbf{S}},\Lambda_{\mathbf{S}}]$. In what follows we consider how each of the three preconditioners for the saddle point problem~\eqref{eq:saddle-point} are affected by the approximation of $\widehat{\mathbf S}$ to $\mathbf{S}$.   
							We now state bounds on the eigenvalues of the preconditioned saddle point system using each of the preconditioners introduced in section \ref{Preconditioning operators} with $\mathbf{\widehat L}=\mathbf{L}$. 
							
							\begin{proposition}
								With the definitions as stated above, the eigenvalues of $\mathcal{P}_{\mathcal{D}}^{-1}\mathcal{A}$ are real, and satisfy:
								
								\begin{align*}
									\lambda(\mathcal{P}_D^{-1}\mathcal{A})\in& \left[\frac{1-\sqrt{1+4\Lambda_\mathbf{S}}}{2},\frac{1-\sqrt{1+{4\lambda_\mathbf{S}}}}{2}\right] 
									\cup\{1\} \cup\left[\frac{1+\sqrt{1+{4\lambda_\mathbf{S}}}}{2},\frac{1+\sqrt{1+4\Lambda_\mathbf{S}}}{2}\right].
								\end{align*}
								
							\end{proposition}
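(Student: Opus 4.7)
The plan is to mirror the classical Murphy--Golub--Wathen argument for block-diagonally preconditioned saddle point systems, but carried out with the \emph{approximate} Schur complement $\widehat{\mathbf{S}}$ in place of $\mathbf{S}$, and then track how the spread $[\lambda_{\mathbf{S}},\Lambda_{\mathbf{S}}]$ propagates through the resulting scalar equation.

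First I would write the generalized eigenvalue problem $\mathcal{A}v=\mu\mathcal{P}_{\mathcal{D}}v$ with $v=(u^T,w^T,z^T)^T$, obtaining the three block relations
\begin{align*}
\mathbf{D}u+\mathbf{L}z &= \mu\mathbf{D}u,\\
\mathbf{R}w+\mathbf{H}z &= \mu\mathbf{R}w,\\
\mathbf{L}^T u+\mathbf{H}^T w &= \mu\widehat{\mathbf{S}}z.
\end{align*}
For $\mu\neq 1$ the first two equations yield $u=(\mu-1)^{-1}\mathbf{D}^{-1}\mathbf{L}z$ and $w=(\mu-1)^{-1}\mathbf{R}^{-1}\mathbf{H}z$; substituting into the third and using the definition $\mathbf{S}=\mathbf{L}^T\mathbf{D}^{-1}\mathbf{L}+\mathbf{H}^T\mathbf{R}^{-1}\mathbf{H}$, I arrive at the scalar reduction $\mathbf{S}z=\mu(\mu-1)\widehat{\mathbf{S}}z$. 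Hence every $\mu\neq 1$ in $\lambda(\mathcal{P}_{\mathcal{D}}^{-1}\mathcal{A})$ satisfies $\mu(\mu-1)=\nu$ for some $\nu\in\lambda(\widehat{\mathbf{S}}^{-1}\mathbf{S})\subseteq[\lambda_{\mathbf{S}},\Lambda_{\mathbf{S}}]$, which gives $\mu=(1\pm\sqrt{1+4\nu})/2$ and, in particular, establishes that all such $\mu$ are real.

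Next I would handle $\mu=1$: in that case the first two block equations force $\mathbf{L}z=0$ and $\mathbf{H}z=0$, so $z=0$ (since $\mathbf{L}$ is nonsingular), and the third equation reduces to $\mathbf{L}^T u+\mathbf{H}^T w=0$, which admits nontrivial solutions. This accounts for the isolated eigenvalue $1$ in the spectrum.

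Finally, to obtain the stated intervals I would exploit monotonicity of the two branches $\mu_{\pm}(\nu)=(1\pm\sqrt{1+4\nu})/2$ as functions of $\nu\geq\lambda_{\mathbf{S}}>0$: the positive branch $\mu_{+}$ is increasing in $\nu$, so its image on $[\lambda_{\mathbf{S}},\Lambda_{\mathbf{S}}]$ is $\bigl[(1+\sqrt{1+4\lambda_{\mathbf{S}}})/2,\,(1+\sqrt{1+4\Lambda_{\mathbf{S}}})/2\bigr]$, while $\mu_{-}$ is decreasing in $\nu$, giving the image $\bigl[(1-\sqrt{1+4\Lambda_{\mathbf{S}}})/2,\,(1-\sqrt{1+4\lambda_{\mathbf{S}}})/2\bigr]$. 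Taking the union with $\{1\}$ yields the bound in the proposition. There is no real obstacle here --- the only point that requires care is the case $\mu=1$ and verifying that the sign of $\nu$ (guaranteed positive by $\mathbf{S}, \widehat{\mathbf{S}}\succ 0$) makes $\sqrt{1+4\nu}$ real and strictly greater than $1$, so that the three intervals are disjoint and the eigenvalues are indeed real.
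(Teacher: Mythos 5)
Your argument is correct and is precisely the standard Murphy--Golub--Wathen-type derivation that underlies the result the paper invokes: the paper's own proof is a one-line citation of \cite[Theorem 4.2.1]{rees2010preconditioning}, whose proof proceeds exactly as you do, reducing to $\mathbf{S}z=\mu(\mu-1)\widehat{\mathbf{S}}z$ and analysing the two branches of $\mu=(1\pm\sqrt{1+4\nu})/2$ over $\nu\in[\lambda_{\mathbf{S}},\Lambda_{\mathbf{S}}]$. So you have simply made explicit the argument the paper outsources; no gap.
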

							\begin{proof}
								The result directly comes from \cite[Theorem 4.2.1]{rees2010preconditioning}.
							\end{proof}
							
							We can see that obtaining an improved estimate of the Schur complement (in a spectral sense) will lead to tighter bounds on the eigenvalues of the preconditioned system when using $\mathcal{P}_D^{-1}\mathcal{A}$. 
							\begin{corollary}\label{cor:diag_prec}
								If $\widehat{\mathbf{S}} = \mathbf{S}$ 
								\begin{equation*}
									\lambda(\mathcal{P}_D^{-1}\mathcal{A})\in \left\{ \frac{1-\sqrt{5}}{2} ,1, 
									\frac{1+\sqrt{5}}{2}\right\}.
								\end{equation*}
							\end{corollary}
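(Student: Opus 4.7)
The plan is to derive the corollary as a direct specialization of the preceding proposition. When $\widehat{\mathbf{S}} = \mathbf{S}$, the preconditioned Schur complement satisfies $\widehat{\mathbf{S}}^{-1}\mathbf{S} = I_{s(N+1)}$, so every eigenvalue of $\widehat{\mathbf{S}}^{-1}\mathbf{S}$ equals $1$. In the notation of the preceding proposition this means $\lambda_{\mathbf{S}} = \Lambda_{\mathbf{S}} = 1$.

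Substituting $\lambda_{\mathbf{S}} = \Lambda_{\mathbf{S}} = 1$ into the three intervals
\[
\left[\tfrac{1-\sqrt{1+4\Lambda_\mathbf{S}}}{2},\tfrac{1-\sqrt{1+4\lambda_\mathbf{S}}}{2}\right],\quad \{1\},\quad \left[\tfrac{1+\sqrt{1+4\lambda_\mathbf{S}}}{2},\tfrac{1+\sqrt{1+4\Lambda_\mathbf{S}}}{2}\right],
\]
each of the two nontrivial intervals collapses to a single point because its two endpoints coincide. The first interval reduces to $\{(1-\sqrt{5})/2\}$, the middle set is already $\{1\}$, and the third interval reduces to $\{(1+\sqrt{5})/2\}$. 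Taking the union gives the claimed three-point spectrum.

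There is essentially no obstacle here: the result is a one-line substitution into the bounds of the preceding proposition, whose proof handles the structural work. The only thing worth checking explicitly is that the interval endpoints in the proposition are continuous in $\lambda_\mathbf{S},\Lambda_\mathbf{S}$ at the value $1$ so that the degenerate case is genuinely recovered as a limit (which is immediate from the explicit formulas), and that the preconditioner $\mathcal{P}_\mathcal{D}$ in \eqref{eq:def_P_D} really specializes to $\operatorname{blkdiag}(\mathbf{D},\mathbf{R},\mathbf{S})$ when $\widehat{\mathbf{S}} = \mathbf{S}$, which is clear by construction.
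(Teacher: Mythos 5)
Your proposal is correct and matches the paper's intent exactly: the corollary is a direct substitution of $\lambda_{\mathbf{S}}=\Lambda_{\mathbf{S}}=1$ into the interval bounds of the preceding proposition, under which both nontrivial intervals degenerate to the points $(1\pm\sqrt{5})/2$. The remark about continuity is superfluous (the proposition applies directly with coincident endpoints, no limiting argument is needed), but this does not affect the validity of the argument.
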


							
							The quality of the Schur complement approximation also affects 
							the bounds on the eigenvalues of the block triangular preconditioner. 
							
							\begin{proposition}
								With the definitions as stated above, the eigenvalues of $\mathcal{P}_{\mathcal T}^{-1}\mathcal{A}$ are given by 
								$(s+p)(N+1)$ units, and the remaining $s(N+1)$ eigenvalues are given by the eigenvalues of $\mathbf{S}\widehat{\mathbf{S}}^{-1}$. 
							\end{proposition}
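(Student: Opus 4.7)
The plan is to analyze the generalized eigenvalue problem $\mathcal{A}v=\lambda\mathcal{P}_{\mathcal{T}}v$ and extract both the unit eigenvalues and the remaining spectrum by a block elimination. Partition the eigenvector as $v=(u_1^T,u_2^T,u_3^T)^T$ with $u_1\in\mathbb{R}^{s(N+1)}$, $u_2\in\mathbb{R}^{p(N+1)}$, $u_3\in\mathbb{R}^{s(N+1)}$. Using the definitions of $\mathcal{A}$ in~\eqref{eq:saddle-point} and $\mathcal{P}_{\mathcal{T}}$ in~\eqref{eq:def_P_T}, the eigenvalue problem reduces to the three relations
\begin{align*}
(1-\lambda)\bigl(\mathbf{D}u_1+\mathbf{L}u_3\bigr) &= 0,\\
(1-\lambda)\bigl(\mathbf{R}u_2+\mathbf{H}u_3\bigr) &= 0,\\
\mathbf{L}^Tu_1+\mathbf{H}^Tu_2 &= \lambda\widehat{\mathbf{S}}u_3.
\end{align*}
First I would handle the case $\lambda=1$: the top two equations are automatically satisfied, while the third determines $u_3$ in terms of $u_1,u_2$ (using that $\widehat{\mathbf{S}}$ is invertible by construction). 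This yields $(s+p)(N+1)$ linearly independent eigenvectors, so the eigenvalue $1$ appears with at least this multiplicity.

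Next, for $\lambda\neq 1$, the first two equations force $u_1=-\mathbf{D}^{-1}\mathbf{L}u_3$ and $u_2=-\mathbf{R}^{-1}\mathbf{H}u_3$. Substituting into the third equation I obtain
\begin{equation*}
-\bigl(\mathbf{L}^T\mathbf{D}^{-1}\mathbf{L}+\mathbf{H}^T\mathbf{R}^{-1}\mathbf{H}\bigr)u_3=\lambda\widehat{\mathbf{S}}u_3,
\end{equation*}
so the remaining $s(N+1)$ eigenvalues arise from the generalized eigenproblem $\mathbf{S}u_3=-\lambda\widehat{\mathbf{S}}u_3$ and are therefore in one-to-one correspondence with the eigenvalues of $\widehat{\mathbf{S}}^{-1}\mathbf{S}$ (equivalently of $\mathbf{S}\widehat{\mathbf{S}}^{-1}$, up to the conventional sign absorbed in the definition of the Schur complement used in~\eqref{eq:CGproblem}).

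To make the counting rigorous without worrying about whether Case~A and Case~B overlap, I would back up the eigenvector argument with a determinantal computation, the cleanest packaging of the same idea. Namely, forming $\mathcal{A}-\lambda\mathcal{P}_{\mathcal{T}}$ as a $3\times 3$ block matrix and taking the Schur complement of the leading $(s+p)(N+1)\times(s+p)(N+1)$ block, one gets
\begin{equation*}
\det(\mathcal{A}-\lambda\mathcal{P}_{\mathcal{T}})=(1-\lambda)^{(s+p)(N+1)}\det(\mathbf{D})\det(\mathbf{R})\det\bigl(-\lambda\widehat{\mathbf{S}}-\mathbf{S}\bigr),
\end{equation*}
which exhibits directly the factor $(1-\lambda)^{(s+p)(N+1)}$ accounting for the unit eigenvalues and the factor whose roots are the prescribed $s(N+1)$ generalized eigenvalues. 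The main technical point — the only place where care is needed — is ensuring the correct bookkeeping of algebraic multiplicities and checking that the block factorization is valid for $\lambda\neq 1$; the determinantal identity above circumvents this cleanly and provides an algebraic count that agrees with the eigenvector enumeration.
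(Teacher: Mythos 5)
Your argument is correct in substance but takes a genuinely different route from the paper. The paper simply computes the product $\mathcal{A}\mathcal{P}_{\mathcal{T}}^{-1}$ explicitly in block form and observes that it is block lower triangular with diagonal blocks $I$, $I$, and $\mathbf{S}\widehat{\mathbf{S}}^{-1}$, from which the spectrum (with multiplicities) is read off immediately. You instead analyze the generalized eigenproblem $\mathcal{A}v=\lambda\mathcal{P}_{\mathcal{T}}v$ by block elimination and then back it up with the determinantal identity $\det(\mathcal{A}-\lambda\mathcal{P}_{\mathcal{T}})=(1-\lambda)^{(s+p)(N+1)}\det(\mathbf{D})\det(\mathbf{R})\det(-\lambda\widehat{\mathbf{S}}-\mathbf{S})$, which is correct (the $(1-\lambda)$ factors cancel inside the Schur complement, and the identity extends to $\lambda=1$ by polynomial continuity). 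Your determinantal packaging is in fact more careful about algebraic multiplicities than the paper's one-line conclusion, and it correctly handles the possible overlap between the unit eigenvalues and the eigenvalues of $\widehat{\mathbf{S}}^{-1}\mathbf{S}$ (which occurs, e.g., when $\widehat{\mathbf{S}}=\mathbf{S}$). The price is that the paper's route is shorter and the triangular structure gives the multiplicity count for free.

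One point deserves more than the parenthetical you give it: taking $\mathcal{P}_{\mathcal{T}}$ literally as displayed in~\eqref{eq:def_P_T}, with $+\widehat{\mathbf{S}}$ in the $(3,3)$ block, your elimination yields $-\mathbf{S}u_3=\lambda\widehat{\mathbf{S}}u_3$, so the non-unit eigenvalues are the \emph{negatives} of the eigenvalues of $\widehat{\mathbf{S}}^{-1}\mathbf{S}$; since $\mathbf{S}$ and $\widehat{\mathbf{S}}$ are SPD these would all be negative, contradicting the statement (and Corollary~\ref{cor:triang_prec}). The statement is only consistent with the convention in which the $(3,3)$ block of the preconditioner is $-\widehat{\mathbf{S}}$, and indeed the paper's displayed $\mathcal{P}_{\mathcal{T}}^{-1}$ is the inverse of that matrix, not of~\eqref{eq:def_P_T} as printed. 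So your derivation actually exposes a sign inconsistency in the source rather than containing a gap itself; you should state explicitly that you adopt the $-\widehat{\mathbf{S}}$ convention rather than waving at "a conventional sign absorbed in the Schur complement," since the sign changes the spectrum from positive to negative and is not cosmetic for the iterative solver.
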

							\begin{proof}
								We consider the product 
								\begin{equation}
									\begin{split}
										\mathcal{A}\mathcal{P}_{\mathcal T}^{-1} &= \begin{pmatrix}
											\mathbf{D} &  0 & \mathbf{L}\\ 0 & \mathbf{R} & \mathbf{H} \\ \mathbf{L}^T& \mathbf{H}^T & 0
										\end{pmatrix}
										\begin{pmatrix}
											\mathbf{D}^{-1} &  0 & \mathbf{D}^{-1}\mathbf{L}\mathbf{\widehat{S}}^{-1}\\ 0 & \mathbf{R}^{-1} & \mathbf{R}^{-1}\mathbf{H}\mathbf{\widehat{S}}^{-1} \\ 0& 0 & -\mathbf{S}^{-1}
										\end{pmatrix} \\&= \begin{pmatrix}
											I & 0 & 0 \\ 0 & I & 0\\ \mathbf{L}^T\mathbf{D}^{-1} & \mathbf{H}^T\mathbf{R}^{-1} & \mathbf{S}\mathbf{\widehat{S}}^{-1}
										\end{pmatrix}.
									\end{split}
								\end{equation}
								The eigenvalues of $ \mathcal{A}\mathcal{P}_{\mathcal T}^{-1}$, and by similarity  $\mathcal{P}_{\mathcal T}^{-1}\mathcal{A}$ are therefore given by $1$ and the eigenvalues of $\mathbf{S}\widehat{\mathbf{S}}^{-1}$.
							\end{proof}
							
							\begin{corollary}\label{cor:triang_prec}
								If  $\widehat{\mathbf{S}} = \mathbf{S}$, then 
								$\lambda(\mathcal{P}_{\mathcal T}^{-1}\mathcal{A}) = 1$.
							\end{corollary}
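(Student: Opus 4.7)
The plan is to derive the corollary as an immediate specialization of the preceding proposition, which already characterises the full spectrum of $\mathcal{P}_{\mathcal T}^{-1}\mathcal{A}$ in terms of the Schur complement approximation.

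First I would recall from the proposition that the spectrum of $\mathcal{P}_{\mathcal T}^{-1}\mathcal{A}$ consists of exactly $(s+p)(N+1)$ unit eigenvalues together with the $s(N+1)$ eigenvalues of $\mathbf{S}\widehat{\mathbf{S}}^{-1}$. This structural decomposition was obtained by inspecting the block lower triangular form of $\mathcal{A}\mathcal{P}_{\mathcal T}^{-1}$, whose diagonal blocks are $I$, $I$, and $\mathbf{S}\widehat{\mathbf{S}}^{-1}$; the similarity between $\mathcal{A}\mathcal{P}_{\mathcal T}^{-1}$ and $\mathcal{P}_{\mathcal T}^{-1}\mathcal{A}$ then transfers the eigenvalues.

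Next I would substitute the hypothesis $\widehat{\mathbf{S}} = \mathbf{S}$. In this case $\mathbf{S}\widehat{\mathbf{S}}^{-1} = \mathbf{S}\mathbf{S}^{-1} = I_{s(N+1)}$, so its spectrum is $\{1\}$ with multiplicity $s(N+1)$. Combining this with the $(s+p)(N+1)$ unit eigenvalues already present yields that every eigenvalue of $\mathcal{P}_{\mathcal T}^{-1}\mathcal{A}$ equals one, giving $\lambda(\mathcal{P}_{\mathcal T}^{-1}\mathcal{A}) = 1$, as claimed.

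There is really no hard step here: the corollary is essentially a restatement of the proposition under the extra assumption that the Schur complement is inverted exactly, and the only thing that needs to be checked is that $\mathbf{S}$ is invertible so that $\widehat{\mathbf{S}} = \mathbf{S}$ makes sense as a preconditioner. This follows since $\mathbf{D}$ and $\mathbf{R}$ are SPD by construction and $\mathbf{L}$ is always nonsingular, so $\mathbf{S} = \mathbf{L}^T\mathbf{D}^{-1}\mathbf{L} + \mathbf{H}^T\mathbf{R}^{-1}\mathbf{H}$ is SPD and hence invertible.
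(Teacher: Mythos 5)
Your proposal is correct and matches the paper's (implicit) argument exactly: the corollary is an immediate consequence of the preceding proposition, since $\widehat{\mathbf{S}}=\mathbf{S}$ makes $\mathbf{S}\widehat{\mathbf{S}}^{-1}=I_{s(N+1)}$ and hence all eigenvalues equal one. The added observation that $\mathbf{S}$ is SPD (so the hypothesis is well posed) is a harmless and sensible extra check.
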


							\begin{proposition}
								With the definitions as stated above, the eigenvalues of $\mathcal{P}_{\mathcal{C}}^{-1}\mathcal{A}$ consist of $(2s-p)(N+1)$ unit eigenvalues, with the remaining $2p(N+1)$ eigenvalues given by 
								\begin{equation}\label{eq:IC eval bounds}
									\lambda(\mathcal{P}_{\mathcal{C}}^{-1}\mathcal{A}) = 1 \pm \sqrt{\lambda_{i}(\mathbf{R}^{-1}\mathbf{HL}^{-1}\mathbf{DL}^{-T}\mathbf{H}^T)}i
								\end{equation}
							\end{proposition}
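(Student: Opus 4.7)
My plan is to analyze the generalized eigenvalue problem $\mathcal{A} v = \lambda \mathcal{P}_{\mathcal{C}} v$ with $\widehat{\mathbf{L}}=\mathbf{L}$ directly. Partition $v = (v_1^T, v_2^T, v_3^T)^T$ conformably with the block structure of $\mathcal{A}$ and $\mathcal{P}_{\mathcal{C}}$, and note that the two matrices differ only in the $(2,3)$ and $(3,2)$ blocks (zero in $\mathcal{P}_{\mathcal{C}}$, equal to $\mathbf{H}, \mathbf{H}^T$ in $\mathcal{A}$). This yields three block equations:
\begin{align*}
(\lambda-1)(\mathbf{D} v_1 + \mathbf{L} v_3) &= 0,\\
(\lambda-1)\mathbf{R} v_2 &= \mathbf{H} v_3,\\
(\lambda-1)\mathbf{L}^T v_1 &= \mathbf{H}^T v_2.
\end{align*}

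The first step is to handle the case $\lambda = 1$: the first equation is trivially satisfied, while the remaining two reduce to $\mathbf{H} v_3 = 0$ and $\mathbf{H}^T v_2 = 0$. Assuming $\mathbf{H}$ has full row rank $(N+1)p$, we obtain $\dim\ker(\mathbf{H}^T) = 0$ so $v_2 = 0$, while $v_1$ is unconstrained ($(N+1)s$ degrees of freedom) and $v_3 \in \ker(\mathbf{H})$ contributes $(N+1)(s-p)$ degrees of freedom. Summing yields exactly $(2s-p)(N+1)$ unit eigenvalues.

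Next, for $\lambda \ne 1$, the first equation gives $v_1 = -\mathbf{D}^{-1}\mathbf{L} v_3$, the second gives $v_2 = (\lambda-1)^{-1}\mathbf{R}^{-1}\mathbf{H} v_3$, and substituting both into the third produces the reduced generalized eigenproblem
\begin{equation*}
\mathbf{H}^T \mathbf{R}^{-1} \mathbf{H}\, v_3 = -(\lambda-1)^2\, \mathbf{L}^T \mathbf{D}^{-1}\mathbf{L}\, v_3.
\end{equation*}
Writing $\mu = -(\lambda-1)^2$ and using that $\mathbf{L}^T\mathbf{D}^{-1}\mathbf{L}$ is SPD, this is equivalent to an eigenproblem for $\mathbf{L}^{-1}\mathbf{D}\mathbf{L}^{-T}\mathbf{H}^T\mathbf{R}^{-1}\mathbf{H}$, whose nonzero eigenvalues coincide with those of $\mathbf{R}^{-1}\mathbf{H}\mathbf{L}^{-1}\mathbf{D}\mathbf{L}^{-T}\mathbf{H}^T$ by the standard $XY\sim YX$ identity for nonzero eigenvalues. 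Since $\mathbf{H}^T\mathbf{R}^{-1}\mathbf{H}$ is positive semi-definite of rank $p(N+1)$, this pencil contributes $p(N+1)$ values $\mu_i \ge 0$, each producing a pair $\lambda = 1 \pm i\sqrt{\mu_i}$, for a total of $2p(N+1)$ non-unit eigenvalues of the expected form.

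The main obstacle is bookkeeping of the overlap between the two cases: the $\mu_i = 0$ directions of the reduced pencil generate $\lambda = 1$ vectors that must not be double-counted against the unit-eigenvalue count from the first case. I would address this by checking that these reduced-problem $\lambda = 1$ solutions correspond precisely to $v_3 \in \ker(\mathbf{H})$, i.e., they are already captured in the $(2s-p)(N+1)$ count above, so the two cases together exhaust all $(2s+p)(N+1)$ eigenvalues of $\mathcal{P}_{\mathcal{C}}^{-1}\mathcal{A}$ without duplication.
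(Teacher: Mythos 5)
Your derivation is correct. The paper itself does not prove this proposition; it simply cites \cite[Appendix A]{fisher2012weak}, so your proposal supplies the self-contained argument that the paper outsources. The block elimination is the natural one: the three block equations are right, the $\lambda=1$ case correctly yields the $(2s-p)(N+1)$-dimensional eigenspace $\{(v_1,0,v_3):\ \mathbf{H}v_3=0\}$, and the reduced pencil $\mathbf{H}^T\mathbf{R}^{-1}\mathbf{H}\,v_3=-(\lambda-1)^2\,\mathbf{L}^T\mathbf{D}^{-1}\mathbf{L}\,v_3$ together with the $XY\sim YX$ identity gives exactly the conjugate pairs $1\pm i\sqrt{\mu_i}$ claimed in \eqref{eq:IC eval bounds}. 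Your handling of the potential double-count is also right: $\mu_i=0$ forces $\mathbf{H}^T\mathbf{R}^{-1}\mathbf{H}v_3=0$, hence $\mathbf{H}v_3=0$ since $\mathbf{R}$ is SPD, so those directions are already absorbed into the unit eigenspace. Two small points worth making explicit. First, the count $(2s-p)(N+1)$ tacitly assumes $\mathbf{H}$ has full row rank $p(N+1)$ (true for the observation operators used in the paper, but it is a hypothesis, not a consequence of the setup); without it, $\mathrm{rank}(\mathbf{H}^T\mathbf{R}^{-1}\mathbf{H})<p(N+1)$ and some of the ``non-unit'' eigenvalues collapse to $1$. Second, to conclude that these geometric eigenspace dimensions account for all eigenvalues with multiplicity, observe that the eigenvectors you construct total $(2s-p)(N+1)+2p(N+1)=(2s+p)(N+1)$ linearly independent vectors (their $v_3$ components are independent, and eigenvectors for distinct eigenvalues are independent), so $\mathcal{P}_{\mathcal{C}}^{-1}\mathcal{A}$ is diagonalizable and the spectrum is exactly as listed. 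What your approach buys over the paper's citation is transparency about precisely these hypotheses.
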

							\begin{proof}
								The proof comes from~\cite[Appendix A]{fisher2012weak}. 
							\end{proof}
							
							\section{Numerical experiments}\label{Numerical experiments}
							\subsection{Experimental framework}
							In this section we display some numerical results achieved by the novel preconditioning framework we presented in this paper for the two problems of interest~\eqref{eq:CGproblem} and~\eqref{eq:saddle-point}. Our matrix-oriented strategy is compared to state-of-the-art vector-oriented approaches designed for  linear systems stemming from data assimilation problems.  In particular, we consider the scheme  from~\cite{tabeart2021saddle} where a user-specified parameter $k$ defines the approximation $\mathbf{\widehat L}^{-1}$. This is used within the Hessian/Schur complement approximation $\mathbf{\widehat S}=\mathbf{\widehat L}^T\mathbf{D}^{-1}\mathbf{\widehat L}$; see~\cite[][Section 4]{tabeart2021saddle} for further details\footnote{Notice that choosing $\mathbf{\widehat L}=\mathbf{L}$ is equivalent to setting $k=N+1$ in~\cite{tabeart2021saddle}.}. We note that the approach of~\cite{tabeart2021saddle} is designed to increase parallelisability of the application of $\mathbf{\widehat L}$ and hence a preconditioner. All experiments presented in this section are performed in serial; by exploiting parallel architectures we expect to see large decreases in wallclock times for the approach of~\cite{tabeart2021saddle} when $k<<N+1$, which is not the case for our new strategy. 
							
							We also compare matrix-oriented CG with the improved Schur complement (as presented in section~\ref{More performing Schur-complement approximations}) against the limited memory preconditioner (LMP) approach of \cite{dauvzickaite2021randomised}, where an alternative identity plus low-rank preconditioner is applied as a second level preconditioner. This method can only be implemented in the vectorised setting, and requires the use of $\widehat{\mathbf{L}}\equiv\mathbf{L}$. Hence, in its current formulation, the LMP approach cannot be easily parallelised, making comparison of wallclock times with the matrix-oriented approach more meaningful than for the approach of~\cite{tabeart2021saddle}.
							
							As previously mentioned, we employ a matrix-oriented implementation of CG (respectively GMRES) whereas the standard \emph{vector} form of CG (resp. GMRES) is adopted whenever a preconditioning strategy different from the one introduced in this paper is considered. This is mainly due to the possibility of using existing code for the preconditioners in \cite{FreitagGreen18,GrattonEtal18,tabeart2021saddle}. Indeed, these routines have been designed for standard GMRES and CG and not for their matrix-oriented counterpart. Notice, however, that the two GMRES/CG implementations are equivalent in exact arithmetic as we do not perform any low-rank truncation within matrix GMRES/CG. On the other hand, the matrix-oriented form of GMRES/CG may present some computational advantages due to the Kronecker form of the blocks of the coefficient matrix $\mathcal{A}$ in~\eqref{eq:saddle-point} and $\mathbf{S}$ in~\eqref{eq:CGproblem}. See, e.g., Table~\ref{tab:Mat vs vec GMRES exact L}.
							
							In all the results reported here, the algorithms have been stopped as soon as the iterative method (either in matrix or vector form) relative residual norm becomes smaller than~$10^{-8}$. 
							The same threshold has been used for the inner CG relative residual norm when we adopt the strategy presented in section~\ref{Algorithmic considerations}. 
							
							In what follows, the matrix-oriented implementation of CG (resp. GMRES) will be denoted by {\sc matCG} (resp. {\sc matGMRES}) whereas its standard, vector counterpart by {\sc vecCG} (resp {\sc vecGMRES}).
							
							All the experiments have been run using Matlab (version 2022a) on a machine with a 1.8GHz
							Intel quad-core i7 processor with 15GB RAM on an Ubuntu 20.04.2 LTS operating system.
							
							For both the problem settings we considered, we used data assimilation terms based on those introduced in \cite{tabeart2021saddle}, which we now present briefly. For all experiments the dimension of the state is $s=1000$ and the number of observations at each observation time is given by $p=500$. The background error covariance matrix and model error covariance matrices are produced using an adapted SOAR correlation function~\cite[][Equation (15)]{tabeart2021saddle} with parameters $L_B = 0.6, L_Q= 0.75$, $\sigma_B =0.5$, $\sigma_Q = 0.2$, $100$ non-zero entries per row for $B$ and $120$ for $Q$. The observation error covariance matrix $R \in \mathbb{R}^{500\times 500}$ is produced using the block approach of \cite{tabeart2021saddle}. The observation operator $H \in \mathbb{R}^{500\times 1000}$ has a single unit entry per row, arranged in ascending column order. Each of these terms is repeated in a Kronecker structure to obtain $\mathbf{D} = e_1e^T_1\otimes B + (I_{N+1}-e_1e^T_1 )\otimes Q \in \mathbb{R}^{1000(N+1)\times 1000(N+1)},$ $\mathbf{R}= I_{N+1}\otimes R  \in \mathbb{R}^{500(N+1)\times 500(N+1)}$, and $\mathbf{H} = I_{N+1}\otimes H\in \mathbb{R}^{500(N+1)\times1000(N+1)}
							$. 
							We discuss the two classes of model matrices in the relevant section.
							

							\subsection{Results for Lorenz96}\label{Results for Lorenz 96}
							Our first example is the Lorenz96 problem \cite[]{lorenz1996predictability}, a nonlinear set of coupled ODEs that is often used as a data assimilation test problem due to its chaotic nature. 
							
							Consider $s$ equally spaced points on the unit line, e.g. $\Delta x = \frac{1}{s}$. For $i=1,\dots, s$, we consider
							\begin{equation*}
								\frac{dx_i}{dt} = (x_{i+1}-x_{i-2})x_{i-1} -x_i+8,
							\end{equation*}
							with periodic boundary conditions (i.e. $x_{-1} = x_{s-1}, x_0 = x_s$). We discretize the equations above by using the numerical implementation of \cite{el2015conditioning}, which integrates the model forward in time using a fourth-order Runge-Kutta scheme. We consider $s=1000$, and unless otherwise mentioned, we use $\Delta t = 10^{-6}$.
							
							As illustrated in Proposition~\ref{prop:Mi not equal M}, we expect the strategy proposed in section~\ref{The case of different Ms} to work well whenever the selected $\widehat{M}$ has small spectral norm and is such that the matrices $D_i=\widehat{M}-M_i$ has small spectral norm for all $i=1,\ldots,N$ as well.
							In Table~\ref{tab:my_label2} we report $\|\widehat{M}\|$ and $\max_m\lambda_{\max}(D_m^TD_m)$ for $N=10$ and different choices of $\widehat{M}$ varying $\Delta t$. In particular, we consider as $\widehat{M}$ the symmetrised first and the last matrices in the block subdiagonal of $\mathbf{L}$, namely $Sym(M_1)=1/2(M_1+M_1^T)$ and $Sym(M_{10})=1/2(M_{10}+M_{10}^T)$, and the Karcher mean\footnote{The Karcher mean is computed by means of the routine {\tt positive\_definite\_karcher\_mean} included in the Matlab toolbox Manopt 6.0 \cite[]{manopt}. As optimization procedure, we adopted the Barzilai-Borwein approach presented in~\cite[]{Porcelli} within {\tt positive\_definite\_karcher\_mean}.} of the symmetric parts of the matrices $M_i$, $i=1,\ldots,10$, namely $Sym(M_{i})=1/2(M_i+M_i^T)$, as these are all SPD. In what follows, we denote the Karcher mean by $Karch(M_i)$. We also report the computed upper bound of Proposition~\ref{prop:Mi not equal M}.
							
							For  $\Delta t\le 1\times 10^{-3}$ the aforementioned selections of $\widehat{M}$ lead to very similar results. As $\|\widehat{M}\|$ is close to one and $\max_m\lambda_{\max}(D_m^TD_m)$ is rather small, the upper bound on the eigenvalues of $\widehat{\mathbf{L}}^{-T}\mathbf{L}^T\mathbf{L}\widehat{\mathbf{L}}^{-1}$ is also close to $1$. As $\Delta t$ increases, the norm of the linearised $M_i$ operators move further away from $1$, leading to increases in the upper bound. 
							For the Lorenz96 problem,  $\|M_i\|$ increases monotonically with $i$, meaning that for larger choices of $\Delta t$ the norm of $Sym(M_{10})$ is much larger than $1$. 
							We see that in the case $\Delta t=10^{-1}$ the choice of $\widehat{M}$ makes a large difference to the upper bound in Proposition~\ref{prop:Mi not equal M}. In section~\ref{The case of different Ms}, we proposed to select $\widehat{M}$ with the smallest norm in order to minimise the upper bound on the preconditioned spectrum, or by minimising $\max_m\lambda_{\max}(D_m^TD_m)$ in the case that the norms have similar values. This approach is supported by the results of Table~\ref{tab:my_label2}, and motivates the selection of $\widehat{M}=Sym(M_{1})$ for this problem for the experiments that follow.  
							
							\begin{table}[t]
								\centering
								\begin{tabular}{cc|c c c}
									$\Delta t$ &$\widehat{M}$ &  $Sym(M_{1})$ &  $ Sym(M_{10})$ & $Karch(M_i)$ \\
									\hline
									$1\times 10^{-6}$&$\|\widehat{M}\|$ & $1.000$&  $1.0023$   & $1.0011 $\\
									& $\max_m\lambda_{\max}(D_m^TD_m)$ & $1.7641\times 10^{-5}$   & $1.7641\times 10^{-5}$ &$1.3981\times 10^{-5}$\\
									& Upper bound~\eqref{eq:upperbound_prep} &  $1.0012$&  $1.0012$ & $1.0001$\\
									$1\times 10^{-3}$ &$\|\widehat{M}\|$ & $0.9980$&  $1.0110 $  & $1.0046$ \\
									& $\max_m\lambda_{\max}(D_m^TD_m)$ & $1.401\times 10^{-2}$   & $1.401\times 10^{-2}$ &$1.009\times 10^{-2}$\\
									& Upper bound~\eqref{eq:upperbound_prep}  &  $1.3796$&$1.3912$ & $1.3261$ \\
									$5\times 10^{-2}$ &$\|\widehat{M}\|$ & $0.9980$&  $1.0110 $  & $1.0046$ \\
									& $\max_m\lambda_{\max}(D_m^TD_m)$ & $1.401\times 10^{-2}$   & $1.401\times 10^{-2}$ &$1.009\times 10^{-2}$\\
									& Upper bound~\eqref{eq:upperbound_prep}& $66.9385$ & $11278$& $320.97$ \\
									$1\times 10^{-1}$&$\|\widehat{M}\|$ & $0.8932$&  $8.2359$   & $1.5188$\\
									& $\max_m\lambda_{\max}(D_m^TD_m)$ & $7.9840$   & $10.8191$ &$7.7113$\\
									& Upper bound~\eqref{eq:upperbound_prep} &   $516.7533$& $2.148\times 10 ^{10}$&$8828.74$ \\
								\end{tabular}
								\caption{Example~\ref{Results for Lorenz 96}. $\|\widehat{M}\|$, $\max_m\lambda_{\max}(D_m^TD_m)$ and the upper bound~\eqref{eq:upperbound_prep} of Proposition~\ref{prop:Mi not equal M} for different selection of $\widehat{M}$ and $N=10$.}
							\label{tab:my_label2}
						\end{table}
						
						In Table~\ref{tab:Mat vs vec GMRES Lorenz} we report the performance achieved by our matrix-oriented preconditioning frameworks: $\widehat{\mathbf{S}}$ (with both $r=0$ and $r=p$ in~\eqref{eq:Woodbury expression}) for~\eqref{eq:CGproblem} and $\mathcal{P}_{\mathcal{D}}$, $\mathcal{P}_{\mathcal{T}}$ (with both $r=0$ and $r=p$ in~\eqref{eq:Woodbury expression}), and $\mathcal{P}_{\mathcal{C}}$ for~\eqref{eq:saddle-point}, varying $\widehat{M}$. {\color{black} We compare these results with those attained by the strategy proposed in~\cite{tabeart2021saddle}, where $\widehat{\mathbf L}$ is chosen as follows:
							\begin{equation*}
								\text{the $(i,j)$th block of }\widehat{\mathbf{L}} = \begin{cases}
									\mathbf{I}, &\quad i=j,\\
									-M_i, &\quad i=j \text{ and } i-k\lfloor\frac{i}{k}\rfloor,\\
									0, &\quad \text{ otherwise. }
								\end{cases}
							\end{equation*} 
							
							As this $\widehat{\mathbf L}$ does not have the form of a Stein equation, it is applied using {\sc vecCG/vecGMRES}, with the parameter $k=3.$}  

						\begin{table}[t]
							\centering
							\begin{tabular}{r|c |c c c}
								&$\widehat{\mathbf{S}}$ & $\mathcal{P}_{\mathcal{D}}$ & $\mathcal{P}_{\mathcal{T}}$ & $\mathcal{P}_{\mathcal{C}}$ \\
								\hline
								{\sc matCG/matGMRES}, $\widehat{M} =Sym(M_{1}) $, $r=0$ & 25.7 &  45.0&   29.0&   46.0\\
								{\sc matCG/matGMRES}, $\widehat{M} =Sym(M_{10})$, $r=0$ &25.8 &  45.0 &  29.0&  46.0\\
								{\sc matCG/matGMRES}, $\widehat{M} = karch(M_i)$, $r=0$ &26.3&   45.0&   28.1&   46.0\\
								{\sc matCG/matGMRES}, $\widehat{M} = Sym(M_{1})$, $r=p$ & 3.8 &    7.0&   6.0  & -\\
								{\sc matCG/matGMRES}, $\widehat{M} = Sym(M_{10})$, $r=p$ &    3.2&7.0&  6.0  & -\\
								{\sc matCG/matGMRES}, $\widehat{M} = karch(M_i)$, $r=p$ &  3.5 &    7.0&  6.0  & -\\
								{\sc vecCG/vecGMRES}~\cite[]{tabeart2021saddle}, $k=3$ &   529.5 & 358.0& 188.7&   66.7\\ 
								\hline
								{\sc matCG/matGMRES}, $\widehat{M} = Sym(M_{1})$, $r=0$ & 9.3816  &  7.5924  &  4.9876  &  7.0108\\
								{\sc matCG/matGMRES}, $\widehat{M} = Sym(M_{10})$, $r=0$ &   9.3436  &  7.7803   & 5.0595   & 6.9689\\
								{\sc matCG/matGMRES}, $\widehat{M} = karch(M_i)$, $r=0$ & 9.4084 &   7.6916   & 4.8374   & 6.8653\\
								{\sc matCG/matGMRES}, $\widehat{M} = Sym(M_{1})$, $r=p$ &  3.3120   & 3.2902 &   3.6184  & -\\
								{\sc matCG/matGMRES}, $\widehat{M} = Sym(M_{10})$, $r=p$ &3.0478  &  3.5083   & 3.7205    & -\\
								{\sc matCG/matGMRES}, $\widehat{M} = karch(M_i)$, $r=p$ &   3.4022  &  3.4488 &   3.7182    & -\\
								{\sc vecCG/vecGMRES}~\cite[]{tabeart2021saddle}, $k=3$ &   222.9978  &153.9620 &  91.1391  & 19.7812\\ 
							\end{tabular}
							\caption{Example~\ref{Results for Lorenz 96}. Iterations (top) and wallclock time (bottom) to convergence for the Lorenz96 problem with $\Delta t=10^{-6}$ for $N = 10 $ for different preconditioners. Values are averaged over 10 realisations. The pre-computation for the Karcher mean took 28.7684 seconds.}
							\label{tab:Mat vs vec GMRES Lorenz}
						\end{table}

						From the results in Table~\ref{tab:Mat vs vec GMRES Lorenz} we can see that the use of the $\widehat{\mathbf{L}}$ proposed in section~\ref{The case of different Ms} is very effective in reducing the overall iteration count for all $\widehat{\mathbf{S}}$, $\mathcal{P}_{\mathcal{D}}$, $\mathcal{P}_{\mathcal{T}}$, and $\mathcal{P}_{\mathcal{C}}$. The number of iterations achieved by $\widehat{\mathbf{S}}$, $\mathcal{P}_{\mathcal{D}}$ and $\mathcal{P}_{\mathcal{T}}$ with $r=p$ is remarkably small, especially when compared to the one attained by employing the $\widehat{\mathbf{L}}$ coming from~\cite{tabeart2021saddle} with $k=3$. These small numbers of iterations impact on the wallclock time of the overall solution problem too with $\widehat{\mathbf{S}}$, $r=p$, with $\widehat{M}=Sym(M_{10})$ being the fastest approach we tested. However, we note that the approach of~\cite{tabeart2021saddle} is designed to increase parallelisability of preconditioners, and significant speed up is to be expected for this preconditioner in a parallel setting (all experiments presented here are performed in serial). 
						
						In Table~\ref{tab:Mat vs vec GMRES Lorenz2} we report the results obtained for $N=100$ for a selection of parameters.  For our new strategy, we document the results achieved using $\widehat{M}=Sym(M_{1})$.
						We can see that $\widehat{\mathbf{S}} $, $\mathcal{P}_{\mathcal{D}}$, and $\mathcal{P}_{\mathcal{T}}$ with $r=p$ lead to a very small number of {\sc matCG/matGMRES} iterations also for this problem setting. Moreover, they scale much better than the strategy from~\cite{tabeart2021saddle} in terms of computational timing in this serial setting. However, we recall from Table~\ref{tab:my_label2}  that the norm of the difference between the blocks $M_i$ is small for $\Delta t=10^{-6}$. 
						{\color{black} This is the best scenario for our novel preconditioning approach.}
						
						In Table~\ref{tab:Comp with LMP} we consider the performance of different approximations $\widehat{\mathbf{S}}$ for the SPD problem~\eqref{eq:CGproblem}. In addition to the low-rank approach presented in section~\ref{More performing Schur-complement approximations} we consider the limited memory preconditioning (LMP) approach studied in \cite{dauvzickaite2021randomised}. The LMP approach approximates eigenvalues of $\mathbf{I}+\mathbf{D}^{1/2}\mathbf{L}^{-T}\mathbf{H}^T\mathbf{R}^{-1}\mathbf{H}\mathbf{L}^{-1}\mathbf{D}^{1/2}$, i.e. symmetrically preconditioning with the exact $\mathbf{L}^T\mathbf{D}^{-1}\mathbf{L}$ operator. Spectral information is typically approximated using randomised numerical linear algebra approaches. In addition to requiring spectral information of a much large linear system than the approach considered in section~\ref{More performing Schur-complement approximations}, LMP requires the use of $\widehat{\mathbf{L}} = \mathbf{L}$, meaning that it cannot be readily applied  in the matrix-oriented approach. Hence, computing the full spectrum of $\mathbf{I}+\mathbf{D}^{1/2}\mathbf{L}^{-T}\mathbf{H}^T\mathbf{R}^{-1}\mathbf{H}\mathbf{L}^{-1}\mathbf{D}^{1/2}$ is   prohibitively costly both in terms of storage and computation. {\color{black} We therefore compare {\sc matCG} preconditioned with $\widehat{\mathbf{S}}$ as in~\eqref{eq:Woodbury expression} with $r=0,r=p$ with {\sc vecCG} preconditioned in two different ways. In the first place, we use $\widehat{\mathbf{S}}$ as in~\eqref{eq:Shat_exp} where the inverse of the exact $\mathbf{L}$ is computed by means of the algorithm in~\cite{tabeart2021saddle} by setting $k=N+1$. The low-rank term $\mathbf{K}_r\mathbf{K}_r^T$ is constructed by considering $r=10,$ or $r=p$ eigenpairs of $H^TRH$. The second preconditioning approach for {\sc vecCG} is given by LMP. Also in this case $\mathbf{L}^{-1}$ is computed by following~\cite{tabeart2021saddle} with $k=N+1$. In LMP, the rank of the low-rank approximation to $\mathbf{D}^{1/2}\mathbf{L}^{-T}\mathbf{H}^T\mathbf{R}^{-1}\mathbf{H}\mathbf{L}^{-1}\mathbf{D}^{1/2}$ is set to $10(N+1)$. Notice that this means that we are employing the same number of eigenpairs as  in~\eqref{eq:Shat_exp} for $r=10$. Indeed, if $V_r\Upsilon_rV_r^T\approx H^TRH$, $V_r\in\mathbb{R}^{p\times 10}$, then $\mathbf{K}_r=I_{N+1}\otimes V_r\Upsilon_r^{1/2}$ has rank $10(N+1)$.
						}
						
						\begin{table}[t]
							\centering
							\begin{tabular}{r|c c c c}
								& $\widehat{\mathbf{S}}$ &$\mathcal{P}_{\mathcal{D}}$& $\mathcal{P}_{\mathcal{T}}$ & $\mathcal{P}_{\mathcal{C}}$ \\
								\hline
								{\sc matCG/matGMRES}, $\widehat{M} = Sym(M_1)$, $r=0$ & 213&239 & 173.5 & 242.5\\
								{\sc matCG/matGMRES}, $\widehat{M} = Sym(M_1)$, $r=p$ &9& 15 & 14 & -\\
								{\sc vecCG/vecGMRES}~\cite[]{tabeart2021saddle}, $k=3$&955.4 & -&   938.1  &  570.9\\ 
								\hline
								{\sc matCG/matGMRES}, $\widehat{M} = Sym(M_1)$, $r=0$&827.34 & 499.70 & 356.71 &504.56\\
								{\sc matCG/matGMRES}, $\widehat{M} = Sym(M_1)$, $r=p$ & 94.80& 105.70 & 116.43& -\\
								{\sc vecCG/vecGMRES}~\cite[]{tabeart2021saddle}, $k=3$ & 4757 & 5054 & 5553  &  2243\\ 
							\end{tabular}
							\caption{Example~\ref{Results for Lorenz 96}. Iterations (top) and wallclock time (bottom) to convergence for the Lorenz96 problem with $N = 100 $ for different preconditioners. 10 realisations. {\sc vecGMRES} with $\mathcal{P}_D$ and $\mathbf{\widehat{L}}^{-1}$ computed as in ~\cite{tabeart2021saddle} ($k=3$) did not converge in 1000 iterations.}
							\label{tab:Mat vs vec GMRES Lorenz2}
						\end{table}

						\begin{table}[]
							\centering
							\begin{tabular}{r|cc}
								Preconditioner & Iterations & Wallclock time\\
								\hline
								{\sc matCG}, $\widehat{M} = Sym(M_1)$, $r=0$ & 212.5 & 845.71 \\
								{\sc matCG}, $\widehat{M} = Sym(M_1)$, $r=p$ & 9 & 98.98 \\
								{\sc vecCG}, $k=N+1$, $r=0$& 178.5& 967.73\\
								{\sc vecCG}, $k=N+1$, $r=10$&126.2& 958.20\\
								{\sc vecCG}, $k=N+1$, $r=10(N+1)$, LMP&28&119.22\\
								& 
							\end{tabular}
							\caption{Example~\ref{Results for Lorenz 96}. Iterations (left) and wallclock time (right) to convergence for the Lorenz96 problem with $N = 100 $ for different preconditioners which approximate $\widehat{\mathbf S}$. Values are averaged over 10 realisations. }
							\label{tab:Comp with LMP}
						\end{table}
						
						We see that in terms of iterations the LMP approach results in much larger reductions than the approach of section~\ref{More performing Schur-complement approximations} for the same number of eigenpairs. However, by exploiting the Kronecker structure of the new preconditioning approach, we can incorporate many more terms, leading to very small number of iterations for {\sc matCG} with $r=p$. We also note the improvement in wallclock times when using the matrix-oriented approach, with the fastest times occuring for {\sc matCG} with $r=p$.
						
						\begin{figure}[t]
							\centering
							\includegraphics[width=0.98\textwidth,trim = 40mm 0mm 40mm 0mm, clip]{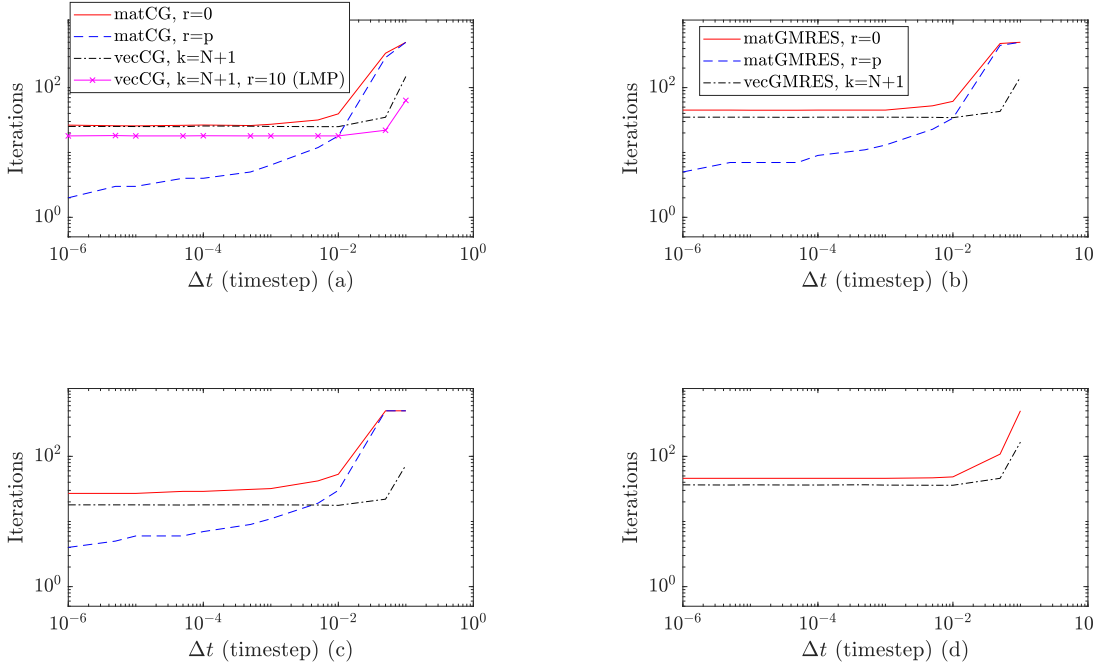}
							\caption{Example~\ref{Results for Lorenz 96}. Iterations to reach convergence with changing time discretization, $\Delta t$, for the Lorenz96 problem for different choices of preconditioner for $N=10$. 
								Panel (a) shows $\widehat{\mathbf{S}}$, (b) shows $\mathcal{P}_\mathcal{D}$, (c)  shows $\mathcal{P}_\mathcal{T}$ and (d) shows $\mathcal{P}_\mathcal{C}$. For all panels the red solid line represents {\sc matGMRES/matCG} with $r=0$ for $\widehat{M} = Sym(M_1)$, blue dashed line represents {\sc matGMRES/matCG} with $r=p$, and black dot-dashed line {\sc vecGMRES/vecCG} for $k=N+1$. For panel (a) the cyan solid line with cross markers shows {\sc vecCG} for $k=N+1$ and $r=10(N+1)$ using the LMP approach. 
								We report averaged behaviour over 10 realisations.}
							\label{fig:Scaling with dt}
						\end{figure}
						
						{\color{black} Figure \ref{fig:Scaling with dt} shows how the Kronecker preconditioners perform as $\Delta t$ increases, and the difference between linearised model operators increases. We compare against the approach of \cite{tabeart2021saddle} using $k=N+1$, i.e. $\widehat{\mathbf{L}} \equiv \mathbf{L}$. For the SPD problem, we also  plot convergence for the vectorised approach with LMP. For $\Delta t \in [10^{-6},10^{-2}]$ both {\sc matCG/matGMRES} and {\sc vecCG/vecGMRES} behave similarly, with only a small increase in iterations with $\Delta t$. However, for larger values of $\Delta t$ the number of iterations required to reach convergence increases for all methods, with the largest impact seen for {\sc matCG/matGMRES}. The use of $r=p$ within {\sc matCG/matGMRES} is much more sensitive to the choice of $\Delta t$, with a steady increase in the number of iterations required to reach convergence as $\Delta t$ increases. For $\Delta t=5\times 10^{-2}$ there is negligible benefit in terms of iterations to the inclusion of observation information within the Schur complement. For large values of $\Delta t$, the difference between $\widehat{\mathbf{L}} = I_{N+1}\otimes I_s-\Sigma\otimes \widehat{M}$ and $\mathbf{L}$ increases, meaning that including $r>0$ factors coming from the observation term is obtaining an improved estimate of the wrong preconditioner. In the future it might be possible to design alternative additional terms that can correct for this discrepancy.  We note that for LMP as $\widehat{\mathbf{L}}\equiv \mathbf{L}$ the `correct' low-rank update is used for all choices of $\Delta t$.    }

						\subsection{Results for heat equation}\label{Results for heat equation}
						We now present an example with  $M_i=M$ for all $i$. This simpler setting allows us to validate the theoretical properties of our new approach, and consider computational aspects such as the cost of the approach proposed in section~\ref{More performing Schur-complement approximations} and scaling of our methods with increasing observation times. 
						Our second numerical example comes from~\cite[][Section~6.1]{tabeart2021saddle}. For this example, $\mathbf{L}$ is a Stein operator of the form~\eqref{eq:Ltimeindep} where the matrix $M$ amounts to the discrete operator stemming from the discretization of the one-dimensional heat equation on the unit line 
						\begin{equation*}
							\frac{\partial u}{\partial t} = \frac{\partial^2 u}{\partial x^2}
						\end{equation*}
						with Dirichlent boundary conditions $u(0,t) = u_0$, $u(1,t) = u_1$ for all $t\in(0,1]$. 
						By discretising the equation above by means of the forward Euler method in time and second-order central differences in space, $M$ can be written as follows
						$$M=\begin{bmatrix}
							0 & 0 & 0 & 0 &\cdots & 0 & 0\\
							0 & 1-2r & r & 0 &\cdots & 0 & 0\\
							0 & r & \ddots & \ddots &  & \vdots & \vdots \\
							0 & 0 & \ddots & \ddots & \ddots & 0& 0 \\
							\vdots &  & \ddots& \ddots& \ddots& r & 0\\
							0 &\cdots & 0& 0& r& 1-2r& 0\\
							0 &\cdots & 0& 0& 0& 0 & 0\\
						\end{bmatrix},\quad r=\frac{\Delta t}{(\Delta x)^2}.
						$$
						For this example we use $\Delta x = 10^{-3}$ and $\Delta t = 4\times 10^{-7}$.
						
						We begin by comparing the performance of the matrix and vector oriented approaches  when using the exact $\widehat{\mathbf{L}}\equiv\mathbf{L}$ in all four choices of preconditioner proposed in this paper, and $r=0$ in~\eqref{eq:Woodbury expression}. For {\sc vecCG/vecGMRES}, $\mathbf{L}^{-1}$ is computed by using the procedure coming from~\cite{tabeart2021saddle} for $k=N+1$. We remind the reader that the two versions of CG/GMRES are equivalent in exact arithmetic. However, the matrix oriented approaches present some computational advantages for this example.  Table \ref{tab:Mat vs vec GMRES exact L} demonstrates the remarkable gain in efficiency that occurs for $\widehat{\mathbf{S}}$, $\mathcal{P}_{  \mathcal{D}}$, and $\mathcal{P}_{\mathcal{T}}$ for $N=10$.  We notice that the strategy based on 
						$\mathcal{P}_{\mathcal{C}}$ is  faster than the ones related to the block diagonal and block triangular preconditioners, in spite of the fact that it requires a larger number of iterations. This is due to the small $N$ selected in this example which makes the inversion of 
						$\mathbf{D}$ the most expensive step in the preconditioners that involve $\mathbf{\widehat{S}}$. We recall that $\mathcal{P}_{\mathcal{C}}$ avoids the application of $\mathbf{D}^{-1}$. Moreover, for this choice of $N$, {\sc vecGMRES} is  faster than {\sc matGMRES} when
						$\mathcal{P}_{\mathcal{C}}$ is adopted as preconditioning operator. This is related to the cost of the eigendecomposition of $M$ described at the end of section~\ref{On the inversion of the Stein operator}. This step cubically depends on $s$; taking about 0.2s for this problem, namely about $1/2$ of the overall running time achieved by {\sc matGMRES}. Nevertheless, we anticipate the cost of this eigendecomposition to be amortised for larger choices of $N$ (see e.g. Figure \ref{fig: Iterations large heat eqn}).
						
						\begin{table}[t]
							\centering
							\begin{tabular}{r|c |c c c}
								& $\widehat{\mathbf{S}}$ & $\mathcal{P}_{\mathcal{D}}$ & $\mathcal{P}_{\mathcal{T}}$ & $\mathcal{P}_{\mathcal{C}}$ \\
								\hline
								Iterations -- {\sc vecCG/vecGMRES} &18.4  & 36.6 & 18.8 & 37.6\\
								Iterations -- {\sc matCG/matGMRES} &18.4  & 36.6 & 18.8 & 37.6\\
								Wallclock time -- {\sc vecCG/vecGMRES} & 6.2475 &  7.7263 &   4.1889  &  0.4013\\ 
								Wallclock time -- {\sc matCG/matGMRES} & 1.3356 &0.8472  & 0.8493  &  0.5713
							\end{tabular}
							\caption{Example~\ref{Results for heat equation}. Iterations and wallclock time to convergence for
								$N = 10$ and $\widehat{\mathbf{L}} \equiv \mathbf{L}$ for the objective function formulation~\eqref{eq:CGproblem} (column 1) and the saddle point formulation~\eqref{eq:saddle-point} with different preconditioners (columns 2-4) averaged over 10 realisations.}
							\label{tab:Mat vs vec GMRES exact L}
						\end{table}
						
						
						We now focus on the performance achieved by the novel Schur complement approximation~\eqref{eq:Shat_exp} and its implementation illustrated in section~\ref{Algorithmic considerations}. To this end, we consider only $\widehat{\mathbf{S}}$ and $\mathcal{P}_{\mathcal{D}}$. Similar results have also been obtained for $\mathcal{P}_{\mathcal{T}}$ but are not presented here. In Table~\ref{tab:my_label2.1} (left), for different values of $N$, we report the iteration count and the overall running time of {\sc matCG/matGMRES} when the approximate Schur complement $\widehat{\mathbf{S}}$ is adopted for $\mathbf{S}$ and $\mathcal{P}_{\mathcal{D}}$ for $r=0$ and $r=p$.
						We  notice that choosing $r=p$ in~\eqref{eq:Shat_exp} leads to a remarkable decrease in the number of iterations needed to converge. Moreover, the number of CG/GMRES iterations we perform turns out to be $N$-independent for both problem. 
						
						We notice that the number of iterations required to solve the {\sc matCG} formulation is smaller than for {\sc matGMRES}, leading to smaller wallclock times. We note that for $r=p$ we are approximating the inverse of $\mathbf{S}$ to a small tolerance, and hence obtain convergence in a single iteration of {\sc matCG}. Wallclock times for the case $r=p$ are comparable for both problems. {\color{black}
							Even though the use of inner CG introduces some inexactness in the preconditioning step, we notice that the number of iterations performed by {\sc matCG/matGMRES} with $\widehat{\mathbf{S}}$ and $r=p$ is independent of $N$ and equal to the number of iterations expected in case of an exact
							computation of $\widehat{\mathbf{S}}^{-1}$.
						}
						
						The large reduction in the iteration count also leads to a significant speed-up of the overall solution process, which is not obvious in general. Indeed, the use of~\eqref{eq:Woodbury expression} for large values of $r$ can be computationally demanding due to the need to solve the linear system with $I_{r(N+1)}+ \mathbf{K}_r^T\mathbf{L}^{-1}\mathbf{D}\mathbf{L}^{-T}\mathbf{K}_r$. However, thanks to the matrix CG strategy presented in section~\ref{Algorithmic considerations}, which takes full advantage of the semi-diagonalization of $\mathbf{L}$, dealing with $\widehat{\mathbf{S}}$ by~\eqref{eq:Woodbury expression} turns out to be computationally affordable for $r=p$. In Table~\ref{tab:my_label2.1} (far right), we report the number of CG iterations and the related running time needed to approximately invert $\widehat{\mathbf{S}}$ within the {\sc matGMRES} iteration. We remind the reader that, in light of Theorem~\ref{th:P_Dbasisvectors}, $\widehat{\mathbf{S}}^{-1}$ has to be computed only every other GMRES iteration. From the results in Table~\ref{tab:my_label2.1} (right), we can see that the CG steps
						correspond to a small proportion of the overall GMRES running time for small $N$. However, as $N$ increases, the cost of the inner CG iteration becomes a larger proportion of the overall wallclock time. The number of CG iterations increases with $N$, leading to a more demanding preconditioning step for larger numbers of observation times.
						In this scenario, equipping the inner CG solve with effective preconditioning operators may be largely beneficial. However, as we previously mentioned, natural preconditioning candidates were not able to reduce the CG iteration count without significantly increasing its computational cost per iteration. We will explore this challenging topic of designing bespoke preconditioners for the inner CG solver in the future. 
						

						\begin{table}[t]
							\centering
							\begin{tabular}{c|c c || c c |c c }
								& \multicolumn{2}{c||}{{\sc matCG}} &\multicolumn{2}{c|}{{\sc matGMRES}} & \multicolumn{2}{c}{Inner CG}  \\
								&&&&&\multicolumn{2}{c}{2nd GMRES it.} \\ 
								$N$& Its. &  Time &  Its. & Time & Its. & Time   \\
								\hline
								10 ($r=0$) &18.3& 1.1732 & 36.4& 1.4561 && \\
								~~~  ($r=p$) &1&0.6683 &3 & 0.6332 & 20 & 0.0598\\
								\hline
								20  ($r=0$) &26.1& 1.8153 & 51.8 & 2.1692&&\\
								~~~  ($r=p$)&1& 0.8008 &3 &0.7577 & 30.3 & 0.1420\\
								\hline
								30  ($r=0$) &34.2& 2.3684 &63.0 & 2.9455&& \\
								~~~  ($r=p$) &1& 0.9045 &3& 0.9111 &40 & 0.2674 \\
								\hline
								40  ($r=0$)& 42.1 & 2.9996& 74.2 & 4.0273&& \\
								~~~  ($r=p$)&1&1.0495 &3 & 1.0598 &49.3 &  0.4227 \\
								\hline 
								50 ($r=0$)&49.5& 3.5876 &83.0 & 5.1370&&\\
								~~~  ($r=p$)&1& 1.2258 &3 & 1.2289 & 59 & 0.5731\\
								\hline
								60  ($r=0$)&57.1&4.2990 & 92.6 & 6.7560&&\\
								~~~  ($r=p$) & 1& 1.5402 & 3&  1.5267 &67 & 0.8732
							\end{tabular}
							\caption{Example~\ref{Results for heat equation}. Iterations and wallclock time to convergence for  $\widehat{\mathbf{S}}$ (first two columns)
								$\mathcal{P}_{\mathcal{D}}$ (columns 3-6) with the two different choices $r=0$ and $r=p$ in~\eqref{eq:Shat_exp} as $N$ varies. In columns 5 and 6 we report the iterations and wallclock time needed by inner CG method to solve the linear system with $I_{r(N+1)}+ \mathbf{K}_r^T\mathbf{L}^{-1}\mathbf{D}\mathbf{L}^{-T}\mathbf{K}_r$ involved in~\eqref{eq:Woodbury expression} when $r>0$ in $\mathcal{P}_{\mathcal{D}}$. }
							\label{tab:my_label2.1}
						\end{table}
						

					We conclude the heat equation example by comparing the novel preconditioning operators developed in this paper with  state-of-the-art approaches. In particular, we consider {\sc matCG} equipped with $\widehat{\mathbf{S}}$ and {\sc matGMRES} equipped with $\mathcal{P}_\mathcal{D}$, $\mathcal{P}_\mathcal{T}$ (with both $r=0$ and $r=p$ in~\eqref{eq:Shat_exp}), and $\mathcal{P}_\mathcal{C}$.  Using $k=N+1$ within the strategy of~\cite{tabeart2021saddle} becomes infeasible for large values of $N$. We therefore use the strategy coming from~\cite{tabeart2021saddle} with $k=3$ to approximate $\widehat{\mathbf{L}}^{-1}$ as a comparison. This is implemented with  {\sc vecCG/vecGMRES} and is otherwise equipped with the same preconditioning frameworks as our novel approach.

					\begin{figure}[t]
						\centering
						\includegraphics[width=0.9\textwidth,trim=0mm 0mm 0mm 0mm, clip]{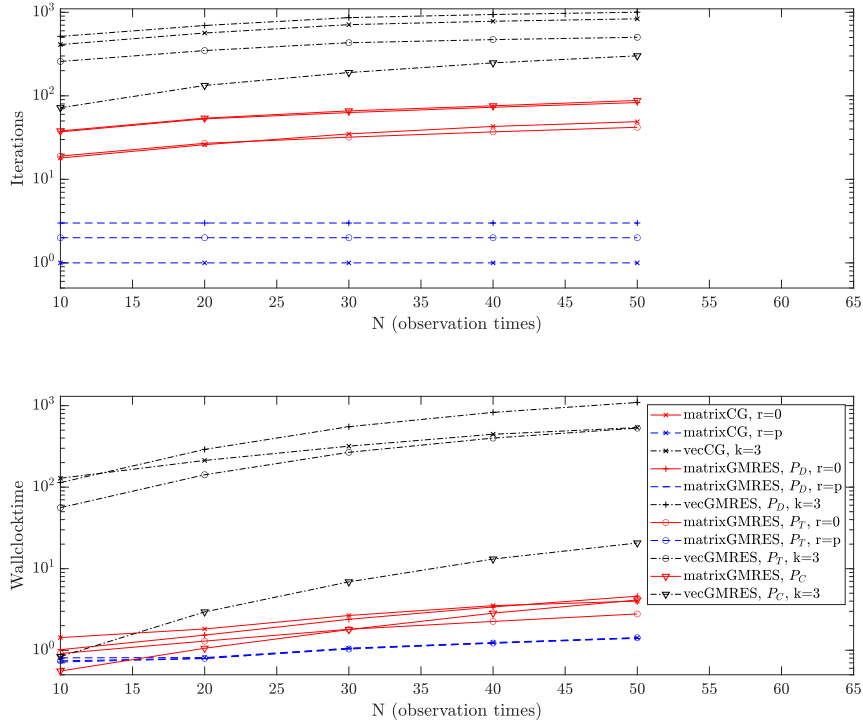}
						\caption{Example~\ref{Results for heat equation}. Iterations (top) and wallclock time (bottom) to reach convergence with increasing problem size (number of observation times) for the heat equation problem for different choices of preconditioner. Red solid lines denote {\sc matCG/matGMRES} with $r=0$, blue dashed lines denote {\sc matCG/matGMRES} with $r=p$, and black dot-dashed lines denote {\sc vecCG/vecGMRES} with $k=3$. Crosses denote $\widehat{\mathbf{S}}$, pluses denote $\mathcal{P}_\mathcal{D}$, circles denote $\mathcal{P}_\mathcal{T}$ and triangle denote $\mathcal{P}_\mathcal{C}$. 
							We report averaged behaviour over 10 realisations. }\label{fig: Iterations large heat eqn}
					\end{figure}
					
					Figure~\ref{fig: Iterations large heat eqn} (top) shows the number of iterations to reach convergence with an increasing number of observation times $N$.
					We can observe that $\widehat{\mathbf{S}}$, $\mathcal{P}_\mathcal{D}$ and $\mathcal{P}_\mathcal{T}$ require a very small number of iterations whenever we select $r=p$ in~\eqref{eq:Shat_exp}. The performance of these operators is optimal as the number of performed CG/GMRES iterations is constant with increasing $N$. We recall that such optimality is not guaranteed, as the  linear system 
					with $I_{r(N+1)}+ \mathbf{K}_r^T\mathbf{L}^{-1}\mathbf{D}\mathbf{L}^{-T}\mathbf{K}_r$ involved in~\eqref{eq:Woodbury expression} is solved iteratively to a relative residual tolerance of  $10^{-8}$.  As previously mentioned, such \emph{inexactness} does not allow us to claim that we are working within the scenarios depicted in Corollary~\ref{cor:diag_prec}--\ref{cor:triang_prec} -- for which we would be able to guarantee an $N$-independent number of CG/GMRES iterations -- even though $r=p$ in~\eqref{eq:Shat_exp}. Nevertheless, for this example the overall solution process does demonstrate $N$ independence of iterations to reach convergence.
					
					Competitive performance is attained also when $r=0$ in the 
					$\widehat{\mathbf{S}}$, $\mathcal{P}_\mathcal{D}$, and $\mathcal{P}_\mathcal{T}$
					preconditioning frameworks compared to the $\mathbf{L}^{-1}$ approximation with $k=3$. The large difference in iterations for the strategy coming from~\cite{tabeart2021saddle} also leads to much longer wallclock times.
					
					We notice that the use of $\mathcal{P}_\mathcal{C}$ leads to a number of GMRES iterations which is always very similar to the one achieved by $\mathcal{P}_\mathcal{D}$ with $r=0$. The performance of $\widehat{\mathbf{S}}$ is also similar to the performance of $\mathcal{P}_\mathcal{T}$ in the $r=0$ setting. 
					

					In Figure~\ref{fig: Iterations large heat eqn} (bottom) we report the computational time of the overall CG/GMRES solution process for all the preconditioning operators we mentioned above. We can see that, except for $\mathcal{P}_\mathcal{C}$ with small $N$, {\sc vecGMRES} is much slower than {\sc matGMRES} equipped with our novel preconditioning strategies. In particular, from the results depicted in Figure~\ref{fig: Iterations large heat eqn} (bottom) we see that selecting $r=p$ in $\widehat{\mathbf{S}}$, $\mathcal{P}_\mathcal{D}$ and $\mathcal{P}_\mathcal{T}$ is a favourable choice over $r=0$ also in terms of running time, with competitive scaling with $N$ when using the improved Schur complement approximation. 

					\section{Conclusions and outlook}\label{Conclusions and outlook}
					{\color{black}
						To fully exploit the rich Kronecker structure of the matrices stemming from weak-constraint 4D-Var problems, matrix-oriented Krylov methods can be employed to solve both~\eqref{eq:CGproblem} and~\eqref{eq:saddle-point}. The use of such machinery naturally leads to the design of new preconditioning approaches. In particular, by selecting a fresh option for the operator $\widehat{\mathbf{L}}$ whose inversion can be recast in terms of the solution of a Stein matrix equation, we designed improved preconditioners able to drastically reduce the Krylov iteration count for certain problems. Our new approach also allows for the efficient inclusion of information from the observation term of the Schur complement $\mathbf{S}$, leading to more accurate approximations $\widehat{\mathbf{S}}$. 
						
						In the case of {\color{black}observation-time} independent forecast models $\mathcal{M}$, our new preconditioning frameworks achieve  optimal performance in terms of the number of iterations,  remarkably without increasing the computational cost of the overall solution process. 
					}
					
					{\color{black} The implementation presented in this paper requires a number of assumptions on the structure of the data assimilation system, which we hope to relax in future work. Firstly, }
					the machinery developed here relies on having a moderate spatial dimension $s$. This assumption is crucial for the Stein equation solution scheme presented in section~\ref{On the inversion of the Stein operator}. We plan to extend the preconditioning framework presented in this paper to the case of sizable $s$ in near future. This can be achieved, e.g., by using projection-based methods for large-scale Stein equations.
					{\color{black}The approach currently also requires that a number of other components of the assimilation problem have a strict Kronecker structure, meaning that the model error and the observing system are constant for all observation times.} 
				As reported in section~\ref{The case of different Ms}, we could approximate each term at the preconditioning level by means of some Kronecker forms. However, the selection of such approximations may be cumbersome. These aspects will be investigated elsewhere.
				
				\section*{Acknowledgements}
				We thank Adam El-Said for his code for the Lorenz 96 weak constraint 4D-Var assimilation problem. We also thank Ieva Dau{\v{z}}ickait{\.e} for providing us with her code for the LMP implementation of \cite{dauvzickaite2021randomised}.
				
				The first author is member of the Italian INdAM Research group GNCS. His work was partially supported by the research project ``Tecniche avanzate per problemi evolutivi: discretizzazione, algebra lineare numerica, ottimizzazione'' (INdAM - GNCS  Project CUP\_E55F22000270001).
				
				The second author gratefully acknowledges funding from the Engineering and Physical Sciences Research Council (EPSRC) grant EP/S027785/1.
				
				\section*{Appendix A}\label{Appendix}
				Here we report the proof of Theorem~\ref{th:P_Dbasisvectors}.
				
				\begin{proof}
					We first write 
					$$\mathcal{AP}_{\mathcal{D}}^{-1}=
					\begin{bmatrix}
						I & 0 & \mathbf{L}\widehat{\mathbf{S}}^{-1}\\
						0 & I & \mathbf{H}\widehat{\mathbf{S}}^{-1}\\
						\mathbf{L}^T\mathbf{D}^{-1} & \mathbf{H}^T\mathbf{R}^{-1} & 0\\
					\end{bmatrix}.
					$$
					We show the statement by induction on $k\geq 1$.
					
					For $k=1$, we define
					$$v_1=\frac{1}{\sqrt{\|b\|^2+\|d\|^2}}\begin{bmatrix}
						b\\
						d\\
						0
					\end{bmatrix}.
					$$
					Then, 
					$$\widetilde v_2=\mathcal{AP}_{\mathcal{D}}^{-1}v_1=
					\frac{1}{\sqrt{\|b\|^2+\|d\|^2}}\begin{bmatrix}
						b\\
						d\\
						\mathbf{L}^T\mathbf{D}^{-1}b + \mathbf{H}^T\mathbf{R}^{-1}d
					\end{bmatrix},
					$$
					and the latter vector needs to be orthogonalized with respect to $v_1$. 
					A direct computation shows that the outcome of this orthogonalization is 
					$$ \widehat v_2=
					\frac{1}{\sqrt{\|b\|^2+\|d\|^2}}\begin{bmatrix}
						0\\
						0\\
						\mathbf{L}^T\mathbf{D}^{-1}b + \mathbf{H}^T\mathbf{R}^{-1}d
					\end{bmatrix},
					$$
					and $v_2=\widehat v_2/\|\widehat v_2\|$.
					
					We now assume that the result has been shown for a certain $\bar k>1$ and we prove the inductive step for $\bar k+1$.
					
					It holds
					$$\widetilde v_{2(\bar k+1)-1}=\widetilde v_{2\bar k+1}=\mathcal{AP}_{\mathcal{D}}^{-1}v_{2\bar k}=\begin{bmatrix}
						\mathbf{L}\widehat{\mathbf{S}}^{-1}z_{2\bar k}\\
						\mathbf{H}\widehat{\mathbf{S}}^{-1}z_{2\bar k}\\
						0
					\end{bmatrix}.
					$$
					Then the orthogonalization step is such that 
					$$\widehat v_{2(\bar k+1)-1}
					=\widetilde v_{2(\bar k+1)-1}-\sum_{j=1}^{\bar k}\alpha_j
					v_{2j-1}-\sum_{j=1}^{\bar k}\beta_j
					v_{2j},
					$$
					and the only term that may potentially contribute to the third block of $\widehat v_{2(\bar k+1)-1}$ is 
					$$\sum_{j=1}^{\bar k}\beta_j
					v_{2j}=\begin{bmatrix}
						0\\
						0\\
						\sum_{j=1}^{\bar k}\beta_j
						z_{2j}
					\end{bmatrix}.$$ 
					However, all the scalars $\beta_j$'s are zero since
					$$
					\beta_j=\widetilde v_{2(\bar k+1)-1}^Tv_{2j}=
					\begin{bmatrix}
						(\mathbf{L}\widehat{\mathbf{S}}^{-1}z_{2\bar k})^T,
						( \mathbf{H}\widehat{\mathbf{S}}^{-1}z_{2\bar k})^T,
						0
					\end{bmatrix}\begin{bmatrix}
						0\\
						0\\
						z_{2j}
					\end{bmatrix}=0.
					$$
					Therefore, $v_{2(\bar k+1)-1}=\widehat v_{2(\bar k+1)-1}/\|\widehat v_{2(\bar k+1)-1}\|$ has a third zero block.
					
					To conclude, if $v_{2(\bar k+1)-1}=[u_{2(\bar k+1)-1}^T,w_{2(\bar k+1)-1}^T,0]^T$, then
					$$\widetilde v_{2(\bar k+1)}=\mathcal{AP}_{\mathcal{D}}^{-1}v_{2(\bar k+1)-1}=\begin{bmatrix}
						u_{2(\bar k+1)-1}\\
						w_{2(\bar k+1)-1}\\
						\mathbf{L}^T\mathbf{D}^{-1}u_{2(\bar k+1)-1}+
						\mathbf{H}^T\mathbf{R}^{-1}w_{2(\bar k+1)-1}
					\end{bmatrix},
					$$
					and orthonormalizing such a vector with respect to the computed basis, and in particular $v_{2(\bar k+1)-1}$, leads to a 
					$v_{2(\bar k+1)}$ whose third block is the only nonzero block. Hence the result of Theorem 2.1 holds by induction.
				\end{proof}
				
				Here we report the proof of Proposition~\ref{prop:Mi not equal M}.
				
				{\color{black}
					\begin{proof}
						
						We begin by observing that $\widehat{\mathbf{L}}^{-T}\mathbf{L}^T\mathbf{L}\widehat{\mathbf{L}}^{-1} = I + A(\widehat{M})$ where 
						the $(i,j)$th block of $A(\widehat{M})$ is given by
						\begin{equation}
							\begin{cases}
								\sum_{k=i}^N \widehat{M}^{(k-i)T}D_k^TD_k\widehat{M}^{k-i} & \emph{ if } i=j,\\
								D_{i-1}\widehat{M}^{i-j-1}+ \sum_{k=i}^N\widehat{M}^{(k-i)T}D_k^TD_k\widehat{M}^{k-j} & \emph{ if } i>j, \\
								\widehat{M}^{(j-i-1)T}D_{j-1}^T+ \sum_{k=j}^N\widehat{M}^{(k-i)T}D_k^TD_k\widehat{M}^{k-j} & \emph{ if } j>i.\\
							\end{cases}
						\end{equation}
						We then write $A(\widehat{M}) = \sum_{m=1}^N A_m$ where $A_m$ contains terms which depend only on $D_m$ and powers of $\widehat{M}$ and their transposes only.   We bound the eigenvalues of $A(\widehat{M})$ above by applying \cite[][Equation 5.12.2]{bernstein} to obtain
						\begin{equation}
							\lambda_{\max}(A(\widehat{M}))\le 1+ \sum_{k=1}^N \lambda_{\max}({A_k}).
						\end{equation}
						
						We now bound the eigenvalues of $A_m$.  For $m=1,\dots, N$, the $(i,j)$th block of $A_m$ is given by
						
						\begin{equation}
							\begin{cases}
								\widehat{M}^{(m-i)T}D_m^TD_m\widehat{M}^{m-j} & \emph{ if } i,j\le m,\\
								\widehat{M}^{(m-i)T}D_m^T & \emph{ if } j=m+1>i, \\
								D_m\widehat{M}^{m-i} & \emph{ if } i=m+1>j.
							\end{cases}
						\end{equation}
						For all choices of $m$, $A_m$ has $m^2-1$ non-zero blocks, and has rank $2s$. If $0_\ell\in\mathbb{R}^{\ell}$ denotes the zero vector of length $\ell$, the $(m-2)s$ eigenvectors corresponding to zero take the form $(e_i,-\widehat{M}e_i,0_{(m-2)s})$, or $(0_{s},e_i,-\widehat{M}e_i,0_{(m-1)s}, \dots, (0_{(m-1)s},e_i,-\widehat{M}e_i,0_{s})$.
						
						The non-zero eigenvalues of $A_m$ can be found by solving the $s\times s$ system
						\begin{equation*}
							\left(D_m\left(\sum_{k=0}^{N-1}\widehat{M}^k\widehat{M}^{kT}\right)D_m^T\right) v = \frac{\mu^2}{\mu+1} v,
						\end{equation*}
						i.e. 
						\begin{equation}\label{eq:mu}
							\mu = 0.5(\rho\pm\sqrt{\rho^2+4\rho}),
						\end{equation}
						where $\rho$ are the eigenvalues of $(D_m(\sum_{k=0}^{m-1}\widehat{M}^k\widehat{M}^{kT})D_m^T) $.
						
						By the monotonicity of \eqref{eq:mu}, the largest value of $\mu$ occurs for the largest value of $\rho$ with the positive option, and the smallest value of $\mu$ occurs for the largest value of $\rho$ taking the negative option. Therefore an upper bound for $\rho$ provides us with an upper bound for $\mu$, and hence $\lambda_{\max}(A(\widehat{M}))$.
						
						By similarity 
						\begin{align*}
							\max(\rho_m) &= \lambda_{\max}(D_m^T D_m(\sum_{k=0}^{m-1}\widehat{M}^k\widehat{M}^{kT}))\\
							& \le \lambda_{\max}(D_m^T D_m)\lambda_{\max} (\sum_{k=0}^{m-1}\widehat{M}^k\widehat{M}^{kT}))\\ 
							& \le \lambda_{\max}(D_m^T D_m) \sum_{k=0}^{m-1}\lambda_{\max}(\widehat{M}^T\widehat{M})^k.
						\end{align*}
						
						A loose upper bound can be obtained by defining \begin{equation*} \rho_N = \max_{m}\lambda_{\max}(D_m^TD_m)\sum_{k=0}^{N-1}\lambda_{\max}(\widehat{M}^T\widehat{M})^k.
						\end{equation*}
						Moreover,
						$$
						\sum_{k=0}^{N-1}\lambda_{\max}(\widehat{M}^T\widehat{M})^k= \left\{\begin{array}{ll}
							N, &  \text{if}\;\lambda_{\max}(\widehat{M}^T\widehat{M})=1,\\
							&\\
							\frac{1-\lambda_{\max}^N(\widehat{M}^T\widehat{M})}{1-\lambda_{\max}(\widehat{M}^T\widehat{M})},
							& \text{otherwise}.
						\end{array}\right.
						$$

						For every choice of $m$ it holds $\mu_m \le 0.5(\rho_N+\sqrt{\rho_N^2+4\rho_N})$, therefore 
						\begin{equation*}
							\lambda_{max}(A(\widehat{M})) \le 1 + \frac{N}{2}(\rho_N+\sqrt{\rho_N^2+4\rho_N}).
						\end{equation*}
					\end{proof}
					
					Here we report the proof of Proposition~\ref{Prop:SteinSol}.
					
					\begin{proof}
						We show~\eqref{eq:SolStein}. The proof for~\eqref{eq:SolTStein} is analagous.
						
						By plugging~\eqref{eq:Sigmaform} into~\eqref{eq:Stein_eq} we get 
						$$Z-\widehat{M}ZC^T+\widehat{M}Ze_{N+1}e_1^T=V.$$
						Premultiplying by $T^{-1}$ and postmultiplying by $F^T$ yields
						$$\widetilde Z-\Lambda\widetilde Z\Pi+\Lambda\widetilde Z(F^{-T}e_{N+1})(e_1^TF^T)=T^{-1} VF^T,\qquad \widetilde Z:=T^{-1}ZF^T, $$
						whose Kronecker form is given by
						$$(I_{N+1}\otimes I_s-\Pi\otimes\Lambda+(Fe_1)(e_{N+1}^TF^{-1})\otimes \Lambda)\text{vec}(\widetilde Z)=\text{vec}(T^{-1} VF^T).$$
						If $\mathbf{G}:=I_{N+1}\otimes I_s-\Pi\otimes\Lambda\in\mathbb{C}^{(N+1)s\times(N+1)s}$,
						$\mathbf{M}:=Fe_1\otimes\Lambda\in\mathbb{C}^{(N+1)s\times s}$, and $\mathbf{N}:=F^{-T}e_{N+1}\otimes I_s\in\mathbb{C}^{(N+1)s\times s}$, the Sherman-Morrison-Woodbury formula~\cite[][Equation (2.1.4)]{GolubVanLoan13} shows that 
						$$\text{vec}(\widetilde Z)=\mathbf{G}^{-1}\text{vec}(T^{-1} VF^T)-\mathbf{G}^{-1}\mathbf{M}(I+\mathbf{N}^T\mathbf{G}^{-1}\mathbf{M})^{-1}\mathbf{N}^T\mathbf{G}^{-1}\text{vec}(T^{-1} VF^T).
						$$
						Once $\widetilde Z$ is computed, we retrieve $Z$ by performing $Z=T\widetilde Z F^{-T}$.
						
						We now derive a cheap procedure for the computation of $\widetilde Z$ which does not involve the construction of any large matrix.
						
						We first notice that, since $\mathbf{G}$ is diagonal it holds
						$$\text{vec}(Y)=\mathbf{G}^{-1}\text{vec}(T^{-1}VF^T) \quad\Longleftrightarrow\quad Y=P\circ (T^{-1}VF^T),$$
						where $P\in\mathbb{C}^{s\times(N+1)}$ is such that $P_{i,j}=1/(1-\lambda_i\pi_j)$.
						
						Moreover, by exploiting the Kronecker structure of $\mathbf{N}$, we have 
						$$\mathbf{N}^T\mathbf{G}^{-1}\text{vec}(T^{-1} VF^T)=((e_{N+1}^TF^{-1})\otimes I_s)\text{vec}(Y)=YF^{-T}e_{N+1}.$$

						We now focus on the computation of $\mathbf{N}^T\mathbf{G}^{-1}\mathbf{M}$. We remind the reader that 
						$$\mathbf{G}=\begin{pmatrix}
							I_s-\pi_1\Lambda & & \\
							& \ddots & \\
							& & I_s-\pi_{N+1}\Lambda\\
						\end{pmatrix},\; \mathbf{N}=\begin{pmatrix}
							(F^{-T}e_{N+1})_1I_s\\
							\vdots\\
							(F^{-T}e_{N+1})_{N+1}I_s\\
						\end{pmatrix},\,
						\mathbf{M}=\begin{pmatrix}
							(Fe_{1})_1\Lambda\\
							\vdots\\
							(Fe_{1})_{N+1}\Lambda\\
						\end{pmatrix},
						$$
						and a direct computation shows that 
						$$\mathbf{N}^T\mathbf{G}^{-1}\mathbf{M}=\sum_{j=1}^{N+1}(I_s-\pi_j\Lambda)^{-1}\Lambda(Fe_1)_j(F^{-T}e_{N+1})_j=\text{diag}(P(\Lambda Fe_1\circ F^{-T}e_{N+1})).$$
						The formulation above provides a cheap expression for the construction of $\mathbf{N}^T\mathbf{G}^{-1}\mathbf{M}$ along with illustrating its diagonal structure, hence solving the linear system with ${\color{black}U}=I+\text{diag}(P(\Lambda Fe_1\circ F^{-T}e_{N+1}))$ does not significantly increase the cost of computing $Z$.
						
						Returning to the computation of $\widetilde Z$, we have 
						\begin{align*}
							\mathbf{G}^{-1}\mathbf{M}(I+\mathbf{N}^T\mathbf{G}^{-1}\mathbf{M})^{-1}\mathbf{N}^T\mathbf{G}^{-1}\text{vec}(T^{-1} VF^T)=& \mathbf{G}^{-1}\mathbf{M}{\color{black}U}^{-1}YF^{-T}e_{N+1}\\
							=&P\circ(\Lambda {\color{black}U}^{-1}YF^{-T}e_{N+1}e_1^TF^{T})\\
							=&W.
						\end{align*}
						Combining the steps above yields the statement in~\eqref{eq:SolStein}.
					\end{proof}

				}
				
				\section*{Appendix B}\label{AppendixB}
				For the sake of completeness, in Algorithm~\ref{matGMRES} we report the pseudocode of the matrix-oriented GMRES method applied to~\eqref{eq:saddle-point}. The $m$-th basis vector of the Krylov subspace $K_m(\mathcal{A},\mathbf{b})$ is represented in terms of the matrices $\mathcal{V}_{1,m}\in\mathbb{R}^{s\times (N+1)}$, $\mathcal{V}_{2,m}\in\mathbb{R}^{p\times (N+1)}$, and $\mathcal{V}_{3,m}\in\mathbb{R}^{s\times (N+1)}$, namely 
				$$v_m=\text{vec}\begin{bmatrix}
					\mathcal{V}_{1,m}\\
					\mathcal{V}_{2,m}\\
					\mathcal{V}_{3,m}
				\end{bmatrix}.
				$$
				The residual norm in line~\ref{alg:rescomp} can be cheaply computed by following, e.g., the classic Given rotation approach presented in~\cite[Section 6.5.3]{Saad03}.
				
				{\color{black}
					Similarly, in Algorithm~\ref{matCG} we report the pseudocode of the matrix-oriented CG method applied to~\eqref{eq:CGproblem}.
					
					In what follows, $(A)_{i,j}$ will denote the $(i,j)$-th entry of the matrix $A$.
				}
				\setcounter{AlgoLine}{0}
				\begin{algorithm}[t]
					\SetKwInOut{Input}{input}\SetKwInOut{Output}{output}
					\Input{$B,Q_1,\ldots,Q_N\in\mathbb{R}^{s\times s},$ $R_0,\ldots,R_N\in\mathbb{R}^{p\times p}$, $H_0,\ldots,H_N\in\mathbb{R}^{p\times s}$, $M_1,\ldots,M_N\in\mathbb{R}^{s\times s}$ $[b_0,c_1,\ldots,c_N]\in\mathbb{R}^{s\times (N+1)}$, $[d_0,\ldots,d_N]\in\mathbb{R}^{p\times (N+1)}$, $m_{\max}$, $\varepsilon>0$}
					\Output{$\delta\Theta_m,\delta X_m\in\mathbb{R}^{s\times (N+1)}$, $\delta\Lambda_m^T\in\mathbb{R}^{p\times (N+1)}$ such that $\text{vec}(\delta\Theta_m^T,\delta\Lambda_m^T,\delta X_m^T)^T$ is an approximate solution to~\eqref{eq:saddle-point}.}
					\BlankLine
					Compute $\beta=\sqrt{\|[b_0,c_1,\ldots,c_N]\|_F^2+\|[d_0,\ldots,d_N]\|_F^2}$ and set $\mathcal{V}_{1,1}= [b_0,c_1,\ldots,c_N]/\beta$,
					$\mathcal{V}_{2,1}= [d_0,\ldots,d_N]/\beta$, and $\mathcal{V}_{3,1}=0$ \;
					
					\For{$m=1, 2,\dots,$ till $m_{\max}$}{
						
						Set 
						{\small $$\widehat{\mathcal{V}}_{1,m+1}=[B\mathcal{V}_{1,m}e_1,Q_1\mathcal{V}_{1,m}e_2,\ldots,Q_N\mathcal{V}_{1,m}e_{N+1}]+[\mathcal{V}_{3,m}e_1,\mathcal{V}_{3,m}e_2-M_1\mathcal{V}_{3,m}e_1,\ldots,\mathcal{V}_{3,m}e_{N+1}-M_N\mathcal{V}_{3,m}e_{N}]$$
							$$\widehat{\mathcal{V}}_{2,m+1}=[R_0\mathcal{V}_{2,m}e_1,\ldots,R_N\mathcal{V}_{2,m}e_{N+1}]+[H_0\mathcal{V}_{3,m}e_1,\ldots,H_N\mathcal{V}_{3,m}e_{N+1}]$$
							$$\widehat{\mathcal{V}}_{3,m+1}=[\mathcal{V}_{1,m}e_1-M_1^T\mathcal{V}_{1,m}e_2,\ldots,\mathcal{V}_{1,m}e_N-M_N^T\mathcal{V}_{1,m}e_{N+1},\mathcal{V}_{1,m}e_{N}]
							+[H_0^T\mathcal{V}_{2,m}e_1,\ldots,H_N^T\mathcal{V}_{2,m}e_{N+1}]$$
						}\;

						Set $(\underline T_m)_{j,m}=0$ for $j=1,\ldots,m+1$\;
						
						\For{$\ell=1,2$}{
							Compute $$(\underline T_m)_{j,m}=(\underline T_m)_{j,m}+\sqrt{\text{trace}(\widehat{\mathcal{V}}_{1,m+1}^T\mathcal{V}_{1,j})^2+\text{trace}(\widehat{\mathcal{V}}_{2,m+1}^T\mathcal{V}_{2,j})^2+\text{trace}(\widehat{\mathcal{V}}_{3,m+1}^T\mathcal{V}_{3,j})^2},\quad j=1,\ldots,m$$ \;
							
							Set $\widehat{\mathcal{V}}_{i,m+1}=\widehat{\mathcal{V}}_{1,m+1}-\sum_{j=1}^m(\underline T_m)_{j,m}\mathcal{V}_{i,j}$, for $i=1,2,3$\;
						}
						
						Compute $(\underline T_m)_{m+1,m}=\sqrt{\|\widehat{\mathcal{V}}_{1,m+1}\|_F^2+\|\widehat{\mathcal{V}}_{2,m+1}\|_F^2+\|\widehat{\mathcal{V}}_{3,m+1}\|_F^2}$ \;
						
						Set $\mathcal{V}_{i,m+1}=\widehat{\mathcal{V}}_{i,m+1}/(\underline T_m)_{m+1,m}$, $i=1,2,3$\;
						
						Solve $y_m=\argmin_{y\in\mathbb{R}^m}\|\underline T_my-\beta e_1\|$\;
						
						Compute the residual norm $\|r_m\|$\;\label{alg:rescomp}
						
						\If{$\|r_m\|\leq\varepsilon\beta$}{
							Go to \ref{alg:lastline}\;
						}
					}
					Set $\delta\Theta_m=\sum_{j=1}^m\mathcal{V}_{1,j}(e_i^Ty_m)$, $\delta\Lambda_m=\sum_{j=1}^m\mathcal{V}_{2,j}(e_i^Ty_m)$, and $\delta X_m=\sum_{j=1}^m\mathcal{V}_{3,j}(e_i^Ty_m)$
					\label{alg:lastline}
					\caption{Matrix-oriented GMRES for~\eqref{eq:saddle-point}\label{matGMRES}}
				\end{algorithm}

				\setcounter{AlgoLine}{0}
				\begin{algorithm}[t]
					\SetKwInOut{Input}{input}\SetKwInOut{Output}{output}
					\Input{$B,Q_1,\ldots,Q_N\in\mathbb{R}^{s\times s},$ $R_0,\ldots,R_N\in\mathbb{R}^{p\times p}$, $H_0,\ldots,H_N\in\mathbb{R}^{p\times s}$, $M_1,\ldots,M_N\in\mathbb{R}^{s\times s}$ $[b_0,c_1,\ldots,c_N]\in\mathbb{R}^{s\times (N+1)}$, $[d_0,\ldots,d_N]\in\mathbb{R}^{p\times (N+1)}$, $m_{\max}$, $\varepsilon>0$}
					\Output{$\delta X_m\in\mathbb{R}^{s\times (N+1)}$ approximate solution to~\eqref{eq:CGproblem}}
					\BlankLine
					Set $\mathcal R_{0}= W_{0}=[Bb_0,Q_1c_1,\ldots,Q_Nc_N]+[H_0^TR_0^{-1}d_0-M_1^TH_1^TR_1^{-1}d_1,\ldots,H_{N-1}^TR_{N-1}^{-1}d_{N-1}-M_N^TH_N^TR_N^{-1}d_N,H_N^TR_N^{-1}d_N]$, $\delta X_0=0$, and
					compute $\rho_0=\|\mathcal R_{0}\|_F^2$ \;
					
					\For{$m=1, 2,\dots,$ till $m_{\max}$}{
						Set 
						{\small \begin{align*}
								\mathcal{W}_{m}=&[B^{-1}W_{m-1}e_1-M_1^TQ_1^{-1}(W_{m-1}e_2-M_1W_{m-1}e_1),  \\  &Q_1^{-1}(W_{m-1}e_2-M_1W_{m-1}e_1)-M_2^TQ_2^{-1}(W_{m-1}e_3-M_2W_{m-1}e_2),\ldots
								,Q_N^{-1}(W_{m-1}e_{N+1}-M_NW_{m-1}e_{N})]\\
								&+[H_0^TR_0^{-1}H_0W_{m-1}e_1,\ldots,H_N^TR_N^{-1}H_NW_{m-1}e_{N+1}]
							\end{align*}
						}\;
						
						$\alpha_m=\rho_{m-1}/\text{trace}(\mathcal{W}_{m}^TW_{m-1})$\;
						
						$\delta X_m=\delta X_{m-1}+\alpha_m W_{m-1}$\;
						
						$\mathcal{R}_m=\mathcal{R}_{m-1}-\alpha_k\mathcal{W}_m$\;
						
						$\rho_m=\|\mathcal{R}_m\|_F^2$\;
						
						\If{$\sqrt{\rho_m}\leq\varepsilon\rho_0$}{
							Return $\delta X_m$\;
						}
						
						$\beta_m=\rho_m/\rho_{m-1}$\;
						
						$W_m=\mathcal{R}_m+\beta_mW_{m-1}$\;
						
					}
					
					\caption{Matrix-oriented CG for~\eqref{eq:CGproblem}\label{matCG}}
				\end{algorithm}
				\bibliographystyle{siam}
				
				\bibliography{/bibSchur}

			\end{document}